\theoremstyle{plain}
\renewenvironment{abstract}
{ \normalsize
\list{}{\setlength{\leftmargin}{.0cm}%
\setlength{\rightmargin}{\leftmargin}}%
\item {\bf \abstractname.}\relax}
{\endlist}
\newtheorem{theorem}{Theorem}[section]
\newtheorem{lemma}[theorem]{Lemma}
\newtheorem{prop}[theorem]{Proposition}
\newtheorem{corollary}[theorem]{Corollary}
\theoremstyle{definition}
\newtheorem{definition}[theorem]{Definition}
\newcommand{\restr}{\upharpoonright}  
\newcommand{\un}{\uparrow} 
\newcommand{\sqbrad}[2]{\{\hspace{0.03cm}{#1} : {#2}\hspace{0.03cm}\}}
\newcommand{\RAND}{\mathsf{ML}}
\newcommand{\parb}[1]{\big({#1}\big)}
\newcommand{\dom}{\mathsf{dom}}
\newcommand{\pz}{\Pi^0_1}
\newcommand{\pzj}{\Pi^0_1(\emptyset')}
\newcommand{\pzt}{\Pi^0_2}
\newcommand{\twome}{\{0,1\}^{\omega}}
\newcommand{\zj}{\emptyset'}
\newcommand{\twomel}{\{0,1\}^{<\omega}}
\newcommand{\wedga}{\ \wedge\ }
\newcommand{\leqT}{\leq_T}
\newcommand{\geqT}{\geq_T}
\newcommand{\equivT}{\equiv_T}
\newcommand{\impl}{\ \implies\ }
\newcommand{\PA}{\mathsf{PA}}
\newcommand{\zjj}{\emptyset''}
\newcommand{\inv}{^{-1}}
\newcommand{\zjd}{\textup{\textbf{0}}'}
\newcommand{\zjjd}{\textup{\textbf{0}}''}
\title{Complexity of inversion of functions on the reals\thanks{Supported by Beijing Natural Science Foundation (IS24013).}}
\author{George Barmpalias, Mingyang Wang and Xiaoyan Zhang\thanks{Authors are in alphabetical order. 
Thanks to Liang Yu for corrections and suggestions.}}
\affil{State Key Lab of Computer Science\\ \vspace{0.1cm} 
Institute of Software, Chinese Academy of Sciences\\ \vspace{0.1cm}
University of  Chinese Academy of Sciences}
\begin{document}
\maketitle
\begin{abstract}
We study the complexity of deterministic and probabilistic inversions of partial computable functions on the reals.
\end{abstract}
\setcounter{tocdepth}{1}\tableofcontents
\section{Introduction}
\citet{Turing36}  laid the foundations for a general theory of computation which includes 
the computability of real functions \cite{AvigadBrattka2014}.
Early work \cite{Grzegorczyk1955, Freid_banach} 
cumulated in a robust  definition of
computable real functions based on effective continuity  and a
theory of computable analysis \cite{BrattkaHist2016, PR89, Weihrauchbook} which also 
 accommodates the study of computational complexity of real 
 functions \cite{KeriKobook}.\footnote{The  RAM-type model of \cite{BSS89}  is a different approach and not relevant to the present work.}
Besides  total real functions  the theory is just as robust
for partial functions, where {\sc while} loops can restrict their domain \cite{ZuckerPartial15}. 
Recent work  includes the study of extensions of  partial computable real functions to larger domains \cite{hoyrupExt17}.

The focus of this article is the complexity of inverting total and partial computable functions from {\em reals} (infinite binary sequences) to reals. 

Despite the vast research in computable analysis, this basic question is largely unexplored, 
with the exception of elementary facts and the recent work in \cite{Hoyrup14Irreversible}. This became apparent to us after 
a question of \citet{levin2023email} regarding the existence of  partial computable functions that preserve algorithmic randomness and 
are computationally hard to invert, even probabilistically.  

The finite analogues of such  {\em oneway functions} are fundamental in computational complexity while 
their existence remains a major open problem \cite{Levin2003}. 
So Levin's question was whether an answer can be obtained for computable real functions
without computational time and space constraints.

An affirmative answer was  given by \cite{barmpalias2024computable} and independently by  \citet{Gacs24oneway}. However
 the two results use a different formulations of probabilistic computation. 

Toward understanding Levin's oneway real functions and its  variants we study 
the complexity of inversion in terms of  computability focusing on:
\begin{enumerate}[(i)]
\item total and partial computable functions 
\item random-preserving and injective maps
\item deterministic and  probabilistic computation
\item null or positive (measure) domain and range.
\end{enumerate}
According to the standard framework,
partial computable real functions $f$  are continuous; the same is true if $f$ is partial computable relative to
an oracle. In this fashion  we only consider inversions by continuous  functions:  $g$ inverts $f$ on $y$
if $g(y)$ is in the domain of $f$ and $f(g(y))=y$.

Hardness of inversion could be due to the complexity of the domain or range.
Just as the range of a continuous $f$ may not be Borel, the range of a partial computable $f$ may not be arithmetically definable.
This aspect  is essential  and explains the role of measure in the hardness of inversion.

Needed background from computability and randomness
along with  complexity bounds  under  (i)-(iv) are given in \S\ref{basics}.
The role of  random oracles (probabilistic inversions)
on hardness of inversion is studied in \S\ref{SuyZFTF6y} along with
the cases where the domain or range is null.
We discuss Levin's notion of oneway functions 
and compare it with the variant considered by \citet{Gacs24oneway}. 

Under randomness-preservation (in the sense of \citet{MR0223179})  
injective functions are easier to invert. The existence of an injective partial computable oneway function
 is unknown.  A partial answer is given in \S\ref{injective} where a {\em left-oneway} (the notion of \citet{Gacs24oneway})  
 partial computable injective function is constructed.

{\bf Notation.}
We use $x,y,\cdots$ to denote reals and $\sigma,\tau,\cdots$ to denote strings. 
The  bit of $x$ at position $n$ is 
$x(n)$, starting with $n=0$. 
Let $|\sigma|$ be the length of string $\sigma$. Let 
$\omega$ be the set of natural numbers, 
$\{0,1\}^\omega$ be the set of all reals, $\{0,1\}^{<\omega}$ the set of finite binary strings and 
 $\{0,1\}^n$ the set of binary with length $n$. 
Let $\preceq$ be the prefix relation between strings and reals, and $\prec$ the strict prefix relation. 
Let $0^n$ be the $n$-bit string of 0s and  $0^\omega:=000\dots$.
For $x,y\in\twome$ let
\begin{itemize}
\item $x\upharpoonright_n$ be the $n$-bit prefix of $x$, so $x\upharpoonright_n=x(0)x(1)\cdots x(n-1)$,
\item $x\oplus y$ be the real $z$ with  $z(2n)=x(n)$ and $z(2n+1)=y(n)$.
\end{itemize}
A tree $T$ is a non-empty subset of $\{0,1\}^{<\omega}$ such that if $\sigma\prec\tau\in T$ 
then $\sigma\in T$. A  real $x$ is a \textit{path} of $T$ if $\forall n,\ x\upharpoonright_n\in T$ . Let $[T]$ be the class of all paths of $T$. 

The \textit{Cantor space} is $\{0,1\}^\omega$ with the topology generated by basic open sets
 \[
 \llbracket\sigma\rrbracket=\{x\in \{0,1\}^\omega:\sigma\prec x\}
 \]
for $\sigma\in \{0,1\}^{<\omega}$. 
The unions of basic open sets called are called
open subsets of $\{0,1\}^\omega$ and
their intersections are called 
are closed subsets.
Let $\mu$ be the  uniform measure on $\{0,1\}^\omega$ defined by $\mu(\llbracket\sigma\rrbracket)=2^{-|\sigma|}$. 
We  identify  $\{0,1\}^\omega$ with $\{0,1\}^\omega\times \{0,1\}^\omega$ via  $(x,y)\mapsto x\oplus y$.
Classes of reals with positive $\mu$-measure are \textit{positive} and those with $\mu$-measure $0$ are  \textit{null}. 

Let $x\leqT z$ denote that $x$ is computable by a Turing machine with oracle $z$.
The halting problem relative to $x$ is called the {\em jump} of $x$ and is denoted by $x'$.

We use the  notion of randomness and relative randomness in the sense of \citet{MR0223179}
and denote the class of all random reals by  $\mathsf{ML}$. We often refer to 
van Lambalgen's Theorem \cite{vLamb90} (see \cite[Corollary 6.9.3]{rodenisbook}): for every $x,y,z$
the real  $x\oplus y$ is $z$-random if and only if $x$ is $z$-random and $y$ is $x\oplus z$-random.

\section{Complexity bounds and  inversions}\label{basics}
We give  complexity bounds on the domain, range and inversions of partial computable functions,
assuming familiarity with the arithmetical, hyperarithmetical and analytic hierarchies
on $\omega, \twome$. We write $\pz(w)$ for $\pz$ relative to oracle $w$ and similarly for the other definability
classes.

\subsection{Partial computable functions on the reals}
A Turing machine with an infinite oracle tape and an infinite one-way output tape can be regarded as 
an effective map $f$ on the reals: given  $x\in\twome$ on its oracle tape 
the machine runs indefinitely and if the cells of the output tape contain the digits of $y$ we write $y=f(x)$.
This is the standard concept of computability on the reals from computable analysis \cite{PR89,WEIHRAUCH1993191}.

Let $f:\subseteq \{0,1\}^\omega\to \{0,1\}^\omega$ denote  a  function from a nonempty subset of $\{0,1\}^\omega$ to $\{0,1\}^\omega$.  
Let $\mathsf{dom}(f)$ be the domain and $f(\{0,1\}^\omega)$ be the range of $f$.

We write $f(x)\downarrow$ if $x\in\dom(f)$ and $f(x)\uparrow$ otherwise. 
We say that $f$ is {\em partial computable} if 
 there is an oracle Turing functional $\Phi$ such that 
\[
\big(f(x)\downarrow\iff \forall n,\ \Phi^x(n)\downarrow\big)\ \land\ \big(f(x)\downarrow\implies\forall n, \Phi^x(n)=f(x)(n)\big).
\]
If $\Phi$ has access to an additional oracle $w$ we say that $f$ is partial $w$-computable
and write $f\leqT w$.
Every $w$-computable function is continuous. 
Every continuous function with $G_\delta$ domain is $w$-computable for some oracle $w$. 

A \textit{representation} is a partial computable $\hat{f}:\subseteq \twomel\to\twomel$ with
\[
\parb{\sigma\prec\tau\wedga \hat{f}(\tau)\downarrow}\impl 
\parb{\hat{f}(\sigma)\downarrow\wedga \hat{f}(\sigma)\preceq\hat{f}(\tau)}
\]
and is a representation of  $f:\subseteq \{0,1\}^\omega\to \{0,1\}^\omega$ if  for all $x$ and
\[
\forall n,\ \hat{f}(x\upharpoonright_n)\prec f(x)
\hspace{0.3cm}\textrm{and}\hspace{0.3cm}
f(x)\downarrow\iff \lim_n|\hat{f}(x\upharpoonright_n)|=\infty.
\]
A function $f$ is  $w$-computable if  it has a $w$-computable representation. 

\subsection{Complexity of  domain and range}
If $f$ is partial computable its domain is effectively $G_{\delta}$, namely a $\pzt$ class.
The converse also holds, so the domain of a partial computable $f$ might not be closed.

\begin{prop}\label{domPi02}
If $f:\subseteq \{0,1\}^\omega\to \{0,1\}^\omega$ is  partial computable:
\begin{enumerate}[(i)]
\item the domain of $f$ is  a $\Pi^0_2$ class
\item  the restriction of $f$  to any $\Pi^0_2$ class $P$ is partial computable
\item every  $\Pi^0_2$ class is the domain of a partial computable $f$. 
\end{enumerate}
\end{prop}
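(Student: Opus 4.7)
\noindent
My plan is to prove the three parts in order, with (i) essentially a direct unpacking of the definition, (ii) a dovetailing construction on top of $\Phi$, and (iii) a representation-based construction realizing the identity on~$P$.

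For (i), I would work from the clause $f(x)\downarrow\iff\forall n,\ \Phi^x(n)\downarrow$ in the definition of partial computability. Since $\Phi^x(n)\downarrow$ is $\Sigma^0_1$ in $x$ uniformly in $n$ (there exists a stage $s$ and a use $u$ such that $x\restr_u$ forces convergence), applying the universal quantifier over $n$ yields a $\Pi^0_2$ predicate in $x$. Equivalently, using the representation, $x\in\dom(f)\iff\forall k\ \exists n\ |\hat{f}(x\restr_n)|\geq k$, which is manifestly $\Pi^0_2$.

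For (ii), fix $P=\bigcap_n U_n$ with $(U_n)$ a uniformly $\Sigma^0_1$ sequence of open classes, presented by uniformly computable sets $V_n\subseteq\twomel$ with $U_n=\bigcup_{\sigma\in V_n}\dbra{\sigma}$. Let $V_{n,s}$ be the stage-$s$ approximation. I will build a new functional $\Psi$ that simulates $\Phi$ but gates its output on membership certificates for $P$: to compute $\Psi^x(n)$, search in parallel for a stage $s$ such that $\Phi^x(n)[s]\downarrow$ and, for every $j\leq n$, some $\tau\preceq x\restr_s$ lies in $V_{j,s}$. If $x\in P\cap\dom(f)$ then all certificates are eventually found and $\Psi^x(n)=\Phi^x(n)=f(x)(n)$ for every $n$; if $x\notin P$ then some $U_j$ fails to contain $x$ and $\Psi^x(n)\un$ for all $n\geq j$; if $x\notin\dom(f)$ then some $\Phi^x(n)\un$ and $\Psi^x(n)\un$ as well. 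Thus $\Psi$ witnesses that $f\restr P$ is partial computable.

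For (iii), given $P=\bigcap_n U_n$ as above, I aim to realize $P$ as the domain of the identity map on $P$ via a representation. Define the computable monotone function
\[
h(\sigma)=\max\big\{\,k\leq|\sigma|\ :\ \forall j<k\ \exists\tau\preceq\sigma,\ \tau\in V_{j,|\sigma|}\,\big\}
\]
and set $\hat{f}(\sigma)=\sigma\restr_{h(\sigma)}$. Monotonicity of $h$ in $\sigma$ (more witnesses appear at later stages and longer prefixes) guarantees that $\hat{f}$ satisfies the prefix-monotonicity clause of a representation. For $x\in P$, each $j$ has some $\tau_j\preceq x$ in $V_j$, which is confirmed once $n$ is large enough, so $h(x\restr_n)\to\infty$ and the induced function is defined with value $x$. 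For $x\notin P$, fix $j$ with $x\notin U_j$; then no $\tau\preceq x$ lies in $V_j$, so $h(x\restr_n)\leq j$ for all $n$ and the induced function diverges. Hence the partial computable function represented by $\hat{f}$ has domain exactly $P$.

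The routine parts are (i) and (iii); the main place where care is needed is the dovetailing in (ii), specifically ensuring that failure of any one of the three sub-conditions (membership in $P$, convergence of $\Phi^x$ on some input, agreement with $f$) translates correctly into divergence of $\Psi^x$ at a single index $n$, so that the biconditional required by the definition of partial computability is preserved.
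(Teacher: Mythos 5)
Your proof is correct and follows essentially the same route as the paper: (i) by unwinding the definition, (ii) by withholding output on input $n$ until membership of the oracle in the first $n$ open sets of the $\Pi^0_2$ presentation is certified, and (iii) by restricting, in effect, the identity to $P$. The only differences are cosmetic: the paper implements the gating at the level of a representation, setting $\hat{g}(\sigma)=\hat{f}(\sigma)\restr_{t_\sigma}$ with $t_\sigma$ measuring the certified depth in $P$, and obtains (iii) as an immediate consequence of (ii), whereas you phrase the gating as a dovetailed functional and write out the (iii) construction explicitly.
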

\begin{proof}
Let $\hat{f}$ be a partial computable representation of $f$. 
Since
\[x\in\mathsf{dom}(f)\iff\forall n\ \exists m\ \exists s\ |\hat{f}_s(x\upharpoonright_m)\downarrow|\geq n\]
we get (i).
For (ii) let $(V_{i,j})$ be a computable family of clopen sets with
\[
P=\bigcap_i\bigcup_j V_{i,j}
\hspace{0.3cm}\textrm{and}\hspace{0.3cm}
V_{0,0}=\{0,1\}^\omega,  V_{i,j}\subseteq V_{i,j+1}.
\]
Define the partial computable function $\hat{g}(\sigma)=\hat{f}(\sigma)\upharpoonright_{t_{\sigma}}$ where
\[
t_{\sigma}:=\max\sqbrad{i}{i\leq|\hat{f}(\sigma)|\wedga \llbracket\sigma\rrbracket\subseteq V_{i,|\sigma|}}.
\]
If $\sigma\prec\tau$ and $\hat{g}(\tau)\downarrow$ then 
$\hat{f}(\tau)\downarrow$ and $\hat{f}(\sigma)\downarrow\preceq\hat{f}(\tau)$ and 
\[
\llbracket\tau\rrbracket\subset\llbracket\sigma\rrbracket\impl t_{\tau}\geq t_{\sigma}
\impl \hat{g}(\sigma)\downarrow\preceq\hat{g}(\tau)
\]
so $\hat{g}$ is  a representation of a partial computable $g$. 
If $x\in\mathsf{dom}(f)\cap P$ then 
\[
\forall i\ \exists k,j,\ \llbracket x\upharpoonright_k\rrbracket\subseteq V_{i,j}.
\]
If $n\geq k,j$, $|\hat{f}(x\upharpoonright_n)|\geq i$ by $\llbracket x\upharpoonright_n\rrbracket\subseteq V_{i,n}$ 
we get $|\hat{g}(x\upharpoonright_n)|\geq i$. Such $n$ exists for all $i$ so $g(x)\downarrow$. Since 
$\forall n,\ \hat{g}(x\upharpoonright_n)\preceq\hat{f}(x\upharpoonright_n)\prec f(x)$ we get $g(x)=f(x)$. 

If $x\notin\mathsf{dom}(f)$ then $g(x)\uparrow$. 
If $x\notin P$ then 
\[
\exists i\ \forall j,\ x\notin V_{i,j}\impl \forall n,\ |\hat{g}(x\upharpoonright_n)|\leq i\impl  g(x)\uparrow.
\] 
So $g$ is the restriction of  $f$  to $\mathsf{dom}(f)\cap P$. 
Finally (iii) follows from (ii).
\end{proof}

Continuous functions $f$ on subsets of $\twome$ map compact sets to compact sets.
Since $\twome$ is a Hausdorff space, the same holds for closed sets. 
An effective version of this fact is true when $f$ is partial computable.

Recall that if $P\in \Pi^0_1(w)$  there is a $w$-computable tree $\hat{P}$ with $P=[\hat{P}]$. 

\begin{prop}\label{rangePi01w}
Let $f:\subseteq \{0,1\}^\omega\to \{0,1\}^\omega$ be partial computable.

Then for any  $w\in\twome$:

\begin{enumerate}[(i)]
\item if $P\subseteq\mathsf{dom}(f)$ then $P\in \Pi^0_1(w)\impl f(P)\in\Pi^0_1(w)$, 
\item if $Q\in \Pi^0_1(w)$ there is $P\in \Pi^0_1(w)$ with  $f^{-1}(Q)=\mathsf{dom}(f)\cap P$. 
\end{enumerate}
\end{prop}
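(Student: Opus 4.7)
For part (i) the idea is to show that the complement of $f(P)$ can be $w$-c.e.\ enumerated as a union of basic open sets. For part (ii) I will directly build $P$ by pulling back the tree defining $Q$ through a representation of $f$.

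For (i), fix a $w$-computable tree $T_P$ with $P=[T_P]$ and a partial computable representation $\hat{f}$ of $f$. Since $|f(x)|=\infty$ for $x\in P\subseteq\mathsf{dom}(f)$, for every basic open $\llbracket\tau\rrbracket$ we have $\llbracket\tau\rrbracket\cap f(P)=\emptyset$ if and only if for every $x\in P$ some initial segment $x\restr_n$ has $\hat{f}(x\restr_n)\downarrow$ with $|\hat{f}(x\restr_n)|\geq|\tau|$ and $\tau\not\preceq\hat{f}(x\restr_n)$; call such a $\sigma=x\restr_n$ a \emph{$\tau$-refuter}. The set $R_\tau$ of $\tau$-refuters is c.e.\ uniformly in $\tau$, so $V_\tau:=\bigcup_{\sigma\in R_\tau}\llbracket\sigma\rrbracket$ is uniformly $\Sigma^0_1$. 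The observation above says $\llbracket\tau\rrbracket\cap f(P)=\emptyset$ iff $P\subseteq V_\tau$, i.e.\ iff the $\Pi^0_1(w)$ class $P\cap V_\tau^{c}$ is empty. Since this class is presented by a $w$-computable tree, emptiness is $\Sigma^0_1(w)$ by K\"onig's lemma, uniformly in $\tau$. Thus $\{\tau:\llbracket\tau\rrbracket\cap f(P)=\emptyset\}$ is $w$-c.e., which realizes $f(P)^{c}$ as a $\Sigma^0_1(w)$ class and gives $f(P)\in\Pi^0_1(w)$.

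For (ii), write $Q=[\hat{T}]$ for a $w$-computable tree $\hat{T}$ and define
\[
P:=\sqbrad{x\in\twome}{\forall n\ \big(\hat{f}(x\restr_n)\downarrow\ \Rightarrow\ \hat{f}(x\restr_n)\in\hat{T}\big)}.
\]
Its complement is $\bigcup\{\llbracket\sigma\rrbracket:\hat{f}(\sigma)\downarrow\wedga\hat{f}(\sigma)\notin\hat{T}\}$, a $\Sigma^0_1(w)$ class, so $P\in\Pi^0_1(w)$. For $x\in\mathsf{dom}(f)$ we have $|\hat{f}(x\restr_n)|\to\infty$ and $\hat{f}(x\restr_n)\prec f(x)$, so $x\in P$ iff every $\hat{f}(x\restr_n)$ lies in $\hat{T}$ iff $f(x)\in[\hat{T}]=Q$; hence $f^{-1}(Q)=\mathsf{dom}(f)\cap P$.

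\textbf{Main obstacle.} The nontrivial step is part (i): one must argue that the condition ``every $x\in P$ has a $\tau$-refuting prefix'' is $w$-c.e.\ detectable. The key ingredient is compactness of $P$ combined with $w$-effective K\"onig's lemma, which converts the seemingly $\Pi^0_2(w)$ universal-existential statement into a $\Sigma^0_1(w)$ emptiness check on a $w$-computable tree. Everything else is bookkeeping with the representation $\hat{f}$.
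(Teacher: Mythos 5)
Your proposal is correct, and for part (i) it takes a genuinely different route from the paper. The paper proves (i) by a direct positive characterization of membership: $y\in f(P)$ iff for every $n$ there is $\sigma\in\hat{P}\cap\{0,1\}^n$ with $\hat{f}(\sigma)\prec y$, and the nontrivial direction is verified by extracting a convergent subsequence from witnesses $\sigma_i$, using $P\subseteq\mathsf{dom}(f)$ to see that the limit $x$ lies in the domain and $f(x)=y$. You instead enumerate the complement: you detect, uniformly in $\tau$, whether $\llbracket\tau\rrbracket\cap f(P)=\emptyset$ by reducing it to emptiness of the $\Pi^0_1(w)$ class $P\cap V_\tau^{c}$ and applying effective K\"onig's lemma to its $w$-computable tree. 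Your version makes the effectivity very explicit (the paper's formula, as written, involves convergence of $\hat{f}(\sigma)$ and needs a stage-bounded reading to be literally $\Pi^0_1(w)$), which is a real advantage. The one step you leave implicit is that your enumeration exhausts the complement: the union of the cylinders $\llbracket\tau\rrbracket$ disjoint from $f(P)$ equals $f(P)^{c}$ only because $f(P)$ is closed, i.e.\ because $f$ is continuous on its domain and $P$ is a compact subset of $\mathsf{dom}(f)$. This is exactly the content that the paper's subsequence argument establishes (and states as a known topological fact just before the proposition), so it is not a gap, but you should say it in one sentence, since without it you only get one inclusion. For part (ii) your argument is essentially the paper's: you pull back a $w$-computable tree for $Q$ through $\hat{f}$, where the paper pulls back a presentation of $Q$ as an intersection of clopen sets; the two are interchangeable, and your handling of the partiality of $\hat{f}$ (putting $\hat{f}(x\restr_n)\downarrow$ in the antecedent so the complement is $\Sigma^0_1(w)$) is correct.
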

\begin{proof}
For (i) let $\hat{P}$ be a $w$-computable tree with $[\hat{P}]=P$. We show  
\begin{equation}\label{lK2gjbWdJx}
y\in f(P)\iff\forall n\ \exists\sigma\in\hat{P}\cap \{0,1\}^n,\ \hat{f}(\sigma)\prec y.
\end{equation}
If $y\in f(P)$ then  $\exists x\in P,\ f(x)=y$ so $\forall n\ \parb{x\upharpoonright_n\in\hat{P}\cap \{0,1\}^n\wedga\hat{f}(x\upharpoonright_n)\prec y}$.

For the converse, suppose that there is $\sigma_0,\sigma_1,\dots$ with $\sigma_i\in\hat{P}\cap \{0,1\}^i$ and $\hat{f}(\sigma_i)\prec y$. Since $\{0,1\}^\omega$ is compact, there is a subsequence of $\sigma_i$ (without loss of generality, the sequence $\sigma_i$ itself) that converges to some $x$. Let $\tau_i$ be the longest common prefix of $\sigma_i$, $x$. Since $\sigma_i\to x$, we get $|\tau_i|\to\infty$  so $\tau_i\to x$. Also
\begin{itemize}
\item since $\sigma_i\in\hat{P}$ we get $\tau_i\in\hat{P}$ so $x\in[\hat{P}]=P\subseteq\mathsf{dom}(f)$ and $f(x)\downarrow$,
\item since $\hat{f}(\tau_i)\preceq\hat{f}(\sigma_i)\prec y$ and $\tau_i\to x$ we get $f(x)=y$. 
\end{itemize}
This completes the proof of \eqref{lK2gjbWdJx} so $f(P)\in \Pi^0_1(w)$.
For (ii) we have 
\[
x\in f^{-1}(Q)\iff x\in\mathsf{dom}(f)\ \land\ \forall n\ \forall i\ \ 
\llbracket\hat{f}(x\upharpoonright_n)\rrbracket\cap Q_i\neq\emptyset
\] 
where each $Q_i$ is clopen, $i\mapsto Q_i$ is $w$-computable and $Q=\bigcap_i Q_i$. Then 
\[
P=\{x:\forall n\ \forall i\ \llbracket\hat{f}(x\upharpoonright_n)\rrbracket\cap Q_i\neq\emptyset\}
\] 
is $\Pi^0_1(w)$ and $f^{-1}(Q)=\mathsf{dom}(f)\cap P$. 
\end{proof}

Although the domain of any partial computable $f:\subseteq \{0,1\}^\omega\to \{0,1\}^\omega$ is arithmetically definable (indeed $\pzt$)
as a subset of $\twome$ it is possible that it is nonempty and has no arithmetically definable member.
\begin{prop}\label{xtZEktBm2g}
The following hold:
\begin{enumerate}[(i)]
\item if $x\in \Delta^1_1$ there is partial computable $f:\subseteq \{0,1\}^\omega\to \{0,1\}^\omega$ with domain $\dom(f)=\{z\}$ and $x<_T z$
\item there is partial computable $f:\subseteq \{0,1\}^\omega\to \{0,1\}^\omega$ with uncountable $\dom(f)$ and $\dom(f)\cap \Delta^1_1=\emptyset$.
\end{enumerate}
\end{prop}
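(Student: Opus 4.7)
Given $x \in \Delta^1_1$, my goal is to produce a partial computable $f$ with $\dom(f) = \{z\}$ and $x <_T z$; by Proposition~\ref{domPi02}(iii) it suffices to exhibit a $\pzt$ class with a unique member $z$ satisfying $z >_T x$. The strategy is to invoke the Suslin--Kleene characterization: every hyperarithmetic real is a $\Sigma^1_1$-singleton, so there is an arithmetic predicate $R$ and a unique witness $w$ with $\{x\} = \{y : \exists w\, R(y,w)\}$. Setting $z := x \oplus w$ (padding $w$ with information not computable from $x$, such as a jump-like component, to force strictness) gives $z \geq_T x$ directly. I would then verify that $\{z\}$ is $\pzt$ by rewriting ``$y = z$'' in the form $\forall n\, \exists k\, Q(y, n, k)$ for a computable $Q$, using the arithmetic structure of $R$ together with the encoding $z = x \oplus w$ to check, at the $n$-th level, that the initial segment of $y$ of length $n$ is consistent with the unique extension pinned down by the witness. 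Uniqueness of $w$, built into the Suslin--Kleene representation, is what ultimately forces the $\pzt$ class to collapse to a single point.

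\textbf{Plan for part (ii).} The goal is an uncountable $\pzt$ class $P$ with $P \cap \Delta^1_1 = \emptyset$. The crucial contrast with $\pz$ is that no analogue of the low basis theorem holds at the $\pzt$ level: while every nonempty $\pz$ class has a $\Delta^1_1$ member, Gandy's basis theorem for $\Sigma^1_1$ only guarantees members $z$ with $\omega_1^{CK,z} = \omega_1^{CK}$, which need not be hyperarithmetic. I would construct $P = \bigcap_n U_n$ by an effective perfect-tree construction: at stage $n$ the c.e.\ open $U_n$ is arranged to exclude all reals matching the first $n$ bits of any of the first $n$ hyperarithmetic reals in a fixed level-by-level enumeration scheme (using uniformly arithmetic approximations to the $\Delta^1_1$ hierarchy), while simultaneously preserving a perfect subtree below. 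Each $y \in \Delta^1_1$ is determined by its arithmetic approximation at some finite stage $n_y$ and is therefore excluded from $P$ at stage $n_y$; uncountability follows from the preserved perfect subtree, and $P \in \pzt$ by construction.

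\textbf{Main obstacles.} In (i), the hard step is converting the $\Sigma^1_1$-singleton description of $x$ into an effective $G_\delta$ description of $\{z\}$: the uniqueness of the witness $w$ has to be expressed in a $\pzt$-friendly, quantifier-bounded form, while still allowing $x$ to be recovered from $z$. In (ii), the key difficulty is implementing the exclusion of \emph{all} hyperarithmetic reals via a $\pzt$ condition, given that $\Delta^1_1$-membership is globally $\Sigma^1_1$-definable; my approach sidesteps this by exploiting the level-by-level arithmetic approximation structure of the hyperarithmetic hierarchy rather than its $\Pi^1_1$ global definition, since each individual hyperarithmetic real is arithmetic at a \emph{fixed} level and can thus be caught by a uniform $\pzt$ exclusion.
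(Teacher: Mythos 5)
Your reduction of (i) to exhibiting a $\Pi^0_2$ singleton $\{z\}$ with $z>_T x$ is the right target (this is exactly how the paper uses Proposition~\ref{domPi02}(iii)), but the way you produce $z$ has a genuine gap. The $\Sigma^1_1$-singleton representation $\{x\}=\{y:\exists\beta\in\omega^\omega\,\forall n\,R_0(y\upharpoonright_n,\beta\upharpoonright_n)\}$ guarantees a unique \emph{member} $y=x$, but it does \emph{not} guarantee a unique \emph{witness} $\beta$; uniqueness of the witness is not part of the Suslin--Kleene theorem, yet it is the load-bearing step of your argument (``uniqueness of $w$ \dots forces the $\Pi^0_2$ class to collapse to a single point''). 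Without a canonical witness, the natural class $\{y\oplus w: R(y,w)\}$ (after embedding the Baire-space witness into $\{0,1\}^\omega$) is $\Pi^0_2$ but typically not a singleton, and you cannot repair this by selecting, say, the leftmost witness: the witness ranges over $\omega^\omega$, where ``no witness to the left'' is not a $\Sigma^0_1$ (or even arithmetically bounded) condition, since the compactness that makes leftmost paths of $\Pi^0_1$ classes in Cantor space $\Pi^0_2$-definable is unavailable; and canonizing $w$ by $\Pi^1_1$-uniformization makes the defining relation $\Pi^1_1$ rather than arithmetic. The strictness $x<_T z$ is also only gestured at. The paper's proof instead rests on a specific classical input: if $x\in\Delta^1_1$ then $x<_T\emptyset^{(\alpha)}$ for some $\alpha<\omega_1^{\rm CK}$, and each such $\{\emptyset^{(\alpha)}\}$ is a $\Pi^0_2$ class (Chong--Yu, Theorem 2.1.4). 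Some nontrivial fact of this kind about $\Pi^0_2$ singletons is exactly what your sketch is missing.

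\textbf{Part (ii).} Your construction presupposes a ``fixed level-by-level enumeration scheme'' of the hyperarithmetic reals that the construction can consult at finite stages, and no such object exists at the required level of effectiveness. A $\Pi^0_2$ class is $\bigcap_n U_n$ with the $U_n$ \emph{uniformly c.e.}, so excluding ``the first $n$ hyperarithmetic reals'' at stage $n$ requires computable access to (approximations of) such a list; but there is no computable --- indeed no $\Delta^1_1$ --- enumeration of all $\Delta^1_1$ reals (any $\Delta^1_1$ listing would yield, by diagonalization, a $\Delta^1_1$ real omitted from it), and the class of hyperarithmetic reals is $\Pi^1_1$, not $\Sigma^1_1$ as you wrote. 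The observation that each individual hyperarithmetic real is arithmetic in some fixed $\emptyset^{(\alpha)}$ does not sidestep this, because the levels are indexed by notations in Kleene's $\mathcal{O}$, itself $\Pi^1_1$-complete; catching every hyperarithmetic real at a finite stage of a computable construction is precisely what cannot be done directly. The paper imports this difficulty as a known result: there is a computable tree $T\subseteq\omega^{<\omega}$ with $[T]$ uncountable and $[T]\cap\Delta^1_1=\emptyset$ (Chong--Yu, Proposition 2.5.2), and it transfers $[T]$ to Cantor space via the computable embedding $\pi$, whose image is $\Pi^0_2$ and each of whose members computes a path of $T$. A self-contained proof would need an argument of that kind (Harrison orderings/pseudo-well-foundedness or Gandy-basis machinery), not a stage-by-stage diagonalization against an enumeration that does not exist.
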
\begin{proof}
If $x\in \Delta^1_1$ there is an ordinal $\alpha<\omega_1^{\rm CK}$ 
with $x<_T \emptyset^{(\alpha)}$. 
Since $\{\emptyset^{(\alpha)}\}\in \Pi^0_2$ by
\cite[Theorem 2.1.4]{chong2015recursion}, 
by Proposition \ref{domPi02} (iii) we get  (i). Let
\[
\pi(\alpha):=0^{1+\alpha(0)} 1 0^{1+\alpha(1)}\dots 0^{1+\alpha(n)}\dots   
\]
be a computable injective function from $\omega^\omega$ into $\twome$.
By \cite[Proposition 2.5.2]{chong2015recursion} there is a computable tree $T\subseteq \omega^{\omega}$
such that $[T]$ is uncountable and $[T]\cap \Delta^1_1=\emptyset$.
Every $x\in \pi([T])$ computes a member of $[T]$ so $\pi([T])\cap \Delta^1_1=\emptyset$.

Since $\pi([T])\in \pzt$  we get (ii) by Proposition \ref{domPi02} (iii).  
\end{proof}
So  a partial computable $f:\subseteq \{0,1\}^\omega\to \{0,1\}^\omega$
may have a non-empty domain such that all arithmetically definable closed subsets 
(represented as the set of infinite paths through an arithmetically definable tree) of it are empty. 

By Proposition \ref{rangePi01w} (i) the range of any total computable $f:\twome\to\twome$ is  $\pz$.
However, the range of a partial computable $f$ is only $\Sigma^1_1$.
Recall (\cite[Definition 1.1.6]{chong2015recursion})  that a class
$Q\subseteq\twome$ is $\Sigma^1_1$  if for each $y$: 
\[
y\in Q\iff \exists\alpha\in\omega^\omega\ \forall s,\ P(\alpha, y, s)
\]
where  $P\subseteq \omega^\omega\times\twome\times\omega$ is a total computable predicate.

\begin{prop}[\cite{yu2024personal}]\label{qCNqs1lVbH}
The following hold:
\begin{enumerate}[(i)]
\item the range of any partial computable $f:\subseteq\twome\to\twome$ is $\Sigma^1_1$
\item each $\Sigma^1_1$ class is the range of a partial computable $f:\subseteq\twome\to\twome$.
\end{enumerate}
\end{prop}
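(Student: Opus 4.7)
The plan for (i) is to unwind the representation of $f$. Let $\hat{f}$ be a partial computable representation. Then $y$ lies in the range of $f$ exactly when there exists $x$ with $f(x)\de$ and $f(x)=y$, which by the defining property of $\hat{f}$ unfolds as
\[
\exists x\in\twome\ \forall n\ \exists m,s\ \bigl(|\hat{f}_s(x\restr_m)|\geq n\ \land\ \hat{f}_s(x\restr_m)\preceq y\bigr).
\]
This is an existential quantifier over $\twome\subseteq\omega^\omega$ followed by a $\pzt$ condition in $(x,y)$. To match the normal form $\exists\alpha\in\omega^\omega\ \forall n\ P(\alpha,y,n)$ with $P$ computable that is required by the definition of $\Sigma^1_1$, I would absorb into a single $\alpha$ both $x$ and a Skolem function $n\mapsto (m_n,s_n)$ for the inner existentials; the matrix then reduces to a computable check that $\hat{f}_{s_n}(x\restr_{m_n})$ has length $\geq n$ and is a prefix of $y\restr_n$.

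For (ii), I would appeal to the standard projection characterization of $\Sigma^1_1$: every $\Sigma^1_1$ class $Q\subseteq\twome$ can be written as $Q=\{y:\exists x\in\twome,\ x\oplus y\in C\}$ for some $\pz$ class $C\subseteq\twome$. (If the given defining quantifier ranges over $\omega^\omega$ rather than $\twome$, one first encodes the Baire-space parameter via a map like the $\pi$ used in the proof of Proposition \ref{xtZEktBm2g}, at the cost of relaxing $C$ from $\pz$ to $\pzt$; either version suffices here.) Now let $g:\twome\to\twome$ be the total computable function $g(x\oplus y)=y$. Because $C$ is in particular $\pzt$, Proposition \ref{domPi02}(ii) produces a partial computable $f$ equal to $g$ on $C$ with $\dom(f)=C$; its range is exactly $\{g(x\oplus y):x\oplus y\in C\}=Q$.

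The main difficulty in either part is mild. For (i) it is purely bookkeeping: the coding of $\alpha$ must be arranged so that the matrix $P$ genuinely depends computably on $(\alpha,y,n)$, which is the point of Skolemizing the inner existentials before packaging them with $x$. For (ii) the only nontrivial ingredient is the projection characterization of $\Sigma^1_1$, which is classical; once that is granted, Proposition \ref{domPi02}(ii) does all the work, and no new construction is needed.
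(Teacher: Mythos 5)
Your part (i) is fine and is the part the paper dismisses as straightforward; one small slip is in your final parenthetical: the computable matrix should check $y\restr_n\preceq\hat f_{s_n}(x\restr_{m_n})$ (equivalently $\hat f_{s_n}(x\restr_{m_n})\preceq y$, as in your displayed formula), not that the value ``is a prefix of $y\restr_n$'' --- together with length $\geq n$ that would force the value to equal $y\restr_n$ exactly, and Skolem witnesses of that exact form need not exist. For part (ii), the version in your parenthetical is precisely the paper's construction: the paper takes the normal form $y\in Q\iff\exists\alpha\in\omega^\omega\ \forall s\, P(\alpha,y,s)$, codes $\alpha$ by $\pi(\alpha)\in\twome$, and sets $f(x\oplus y)=y$ when $x=\pi(\alpha)$ for some $\alpha$ and $\forall s\,P(\alpha,y,s)$, undefined otherwise; your routing of the partial computability of this restricted projection through Proposition \ref{domPi02}(ii) just makes explicit a step the paper leaves implicit.

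However, the ``standard projection characterization'' you state as your default is false: if $C\subseteq\twome$ is $\pz$ (indeed merely closed), then $\sqbrad{y}{\exists x\in\twome,\ x\oplus y\in C}$ is the image of a compact set under the continuous projection $x\oplus y\mapsto y$, hence closed --- effectively, it is $\pz$ by Proposition \ref{rangePi01w}(i) applied to that projection. So a $\Sigma^1_1$ class that is not closed admits no representation with $C\in\pz$ and the witness ranging over $\twome$, and ``either version suffices here'' is wrong: the existential quantifier must range over $\omega^\omega$ (equivalently over the non-compact $\pzt$ copy $\pi(\omega^\omega)\subseteq\twome$), and correspondingly $C$ can only be taken $\pzt$, exactly as in your parenthetical. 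Since your actual argument goes through verbatim with that $\pzt$ version, this is a repairable misstatement rather than a fatal gap, but as written the main clause of your (ii) rests on a false lemma, so you should delete it and promote the parenthetical to the argument proper.
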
\begin{proof}
Clause (i) is straightforward. For (ii), we fix an effective injection from $\omega^\omega$ to $\{0,1\}^\omega$, for example 
$\pi(\alpha):=0^{1+\alpha(0)} 1 0^{1+\alpha(1)}\dots 0^{1+\alpha(n)}\dots$.

For any $Q\in \Sigma^1_1$, let $P\subseteq \omega^\omega\times\twome\times\omega$ 
be a total computable predicate with \[y\in Q\iff\exists\alpha\in\omega^\omega\ \forall s,\ P(\alpha,y,s).\] 
Let $f$ be such that \[f(x\oplus y)=\begin{cases}
    y & x=\pi(\alpha)\text{ for some }\alpha\ \land\ \forall s,\ P(\alpha,y,s) \\
    \uparrow & \text{otherwise}
\end{cases}\]

Then $f$ is partial computable and $f(\twome)=Q$.
\end{proof}
When $f$ has positive domain it has an effective restriction with $\pzj$ range.

\begin{prop}\label{Pi010ds} 
Let $f:\subseteq \{0,1\}^\omega\to \{0,1\}^\omega$ be a partial computable function.

If $f$ has positive domain then:
\begin{enumerate}[(i)]
\item there is a positive $P\in \Pi^0_1(\emptyset')$, $P\subseteq \mathsf{dom}(f)$ with $f(P)\in \Pi^0_1(\emptyset')$
\item the  restriction of $f$ to $P$ is partial computable
\end{enumerate}
and $P$ consists entirely of 2-random reals.
\end{prop}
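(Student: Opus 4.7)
The plan is to first carve out of $\dom(f)$ a $\Pi^0_1(\emptyset')$ subclass $P_0$ of positive measure from a $\Pi^0_2$ representation of $\dom(f)$, then refine $P_0$ so that it lies inside the $2$-randoms, and finally invoke Propositions~\ref{rangePi01w} and~\ref{domPi02} to get the complexity of $f(P)$ and the computability of the restriction.

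By Proposition~\ref{domPi02}(i), write $\dom(f)=\bigcap_n U_n$ with $\{U_n\}$ a nested sequence of uniformly c.e.\ open sets. Each $\mu(U_n)$ is a left-c.e.\ real uniformly in $n$, and therefore $\emptyset'$-computable uniformly in $n$ since the query ``$\mu(U_n)<r$'' is $\Pi^0_1$. Fix $k\in\omega$. Using $\emptyset'$, for each $n$ choose a stage $s_n$ so that the clopen set $G_n$ formed by the cylinders enumerated into $U_n$ by stage $s_n$ satisfies $\mu(U_n\setminus G_n)<2^{-n-k-1}$. Then $(G_n)_n$ is a uniformly $\emptyset'$-computable sequence of clopen sets, so $P_0:=\bigcap_n G_n$ is $\Pi^0_1(\emptyset')$ and $P_0\subseteq\bigcap_n U_n=\dom(f)$. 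From $\dom(f)\setminus P_0\subseteq\bigcup_n(U_n\setminus G_n)$ we get $\mu(P_0)\geq\mu(\dom(f))-2^{-k}$, so fixing $k$ large enough (in the meta-theory) with $2^{-k}<\mu(\dom(f))/2$ gives $\mu(P_0)>\mu(\dom(f))/2>0$.

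Let $\{V_m\}$ be a universal Martin-L\"of test relative to $\emptyset'$, so each $V_m$ is $\Sigma^0_1(\emptyset')$ with $\mu(V_m)<2^{-m}$ and the $2$-randoms are precisely $\twome\setminus\bigcap_m V_m$. Then $R_m:=\twome\setminus V_m$ is $\Pi^0_1(\emptyset')$ and, because $\bigcap_j V_j\subseteq V_m$, consists entirely of $2$-randoms. Picking $m$ with $2^{-m}<\mu(P_0)/2$ and setting $P:=P_0\cap R_m$ gives $P\in\Pi^0_1(\emptyset')$ with $\mu(P)\geq\mu(P_0)-2^{-m}>0$, $P\subseteq\dom(f)$, and every member of $P$ is $2$-random.

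Finally, Proposition~\ref{rangePi01w}(i) applied with $w=\emptyset'$ yields $f(P)\in\Pi^0_1(\emptyset')$, and since every $\Pi^0_1(\emptyset')$ class is in particular $\Pi^0_2$, Proposition~\ref{domPi02}(ii) shows that the restriction of $f$ to $P$ is partial computable. The main delicacy is the construction of $P_0$: $\mu(\dom(f))$ is not $\emptyset'$-computable in general, so the threshold $k$ cannot be selected effectively; but only the existence of $P_0$ is needed, and parametrizing by $k$ and appealing to a meta-theoretic choice neatly bypasses the absence of an effective positive lower bound on $\mu(\dom(f))$.
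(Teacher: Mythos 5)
Your proof is correct, and its overall structure matches the paper's: obtain a positive $\Pi^0_1(\emptyset')$ subclass of the $\Pi^0_2$ domain, shrink it into the 2-randoms, then apply Proposition~\ref{rangePi01w}(i) with $w=\emptyset'$ for the range and Proposition~\ref{domPi02}(ii) for the restriction. The only real difference is that where the paper simply cites Kurtz's theorem (every positive $\Pi^0_2$ class has a positive $\Pi^0_1(\emptyset')$ subclass, \cite[Theorem 6.8.3]{rodenisbook}) and ``a sufficiently large $\Pi^0_1(\emptyset')$ class of 2-randoms'', you prove the Kurtz step directly by trimming the c.e.\ open sets $U_n$ to $\emptyset'$-computably chosen clopen approximations $G_n$ with error $2^{-n-k-1}$, and you realize the 2-random part explicitly as the complement of a component of a universal Martin-L\"of test relative to $\emptyset'$; this makes the argument self-contained at the cost of a page of standard effective measure theory, and your closing remark correctly identifies that the threshold $k$ need only exist, not be found effectively. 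One cosmetic slip: for a left-c.e.\ real $\alpha$ the predicate ``$\alpha<r$'' is not $\Pi^0_1$; what you want is that ``$\alpha\le r$'' is $\Pi^0_1$ (equivalently ``$\alpha>q$'' is $\Sigma^0_1$), which already suffices for the $\emptyset'$-computation of $\mu(U_n)$ and the choice of the stages $s_n$, so nothing in the argument is affected.
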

\begin{proof}
By \citet{Kurtz81} (see  \cite[Theorem 6.8.3]{rodenisbook}) every positive $\pzt$ class has a
positive $\Pi^0_1(\emptyset')$ subclass $P$. By intersecting $P$ with a sufficiently large $\pzj$ class of 2-randoms
$P$ can be chosen to be a class of  2-randoms. So (i) follows by
Proposition \ref{domPi02} (i) and Proposition \ref{rangePi01w} (i).
Proposition \ref{domPi02} (ii) gives (ii).
\end{proof}

By Proposition \ref{Pi010ds} the complexity
bound in Proposition \ref{qCNqs1lVbH} can be reduced by considering
partial computable restrictions of $f$.

\subsection{Complexity of inversion}
For $f, g:\subseteq \twome\to\twome$ we say that $g$ is
an {\em inversion} of $f$ if 
\[
\forall y\in f(\twome),\ f(g(y))=y.
\]
By \cite[Corollary 3.2]{barmpalias2024computable} every total computable  injective $f$ has
a total computable inversion.  
We are particularly interested in functions that preserve randomness.
\begin{definition}
We say that $f:\subseteq \{0,1\}^\omega\to \{0,1\}^\omega$ is 
\textit{random-preserving} if it has positive domain and 
$f(x)$ is random for each random $x\in \dom(f)$.
\end{definition}
By \cite[Theorem 4.3]{milleryutran}, if $x$ is $w$-random and $y\leq_T x$ is random then $y$ is $w$-random. 
So partial computable random-preserving functions preserve $w$-randomness for all $w$. 

\begin{theorem}
There is a total computable random-preserving surjection $f$ which has no continuous inversion. 
\end{theorem}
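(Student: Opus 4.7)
The plan is to construct a specific total computable $f:\twome\to\twome$ that ``folds'' two distinct computable limit points $0^\omega$ and $10^\omega$ onto the single image $0^\omega$, while being a homeomorphism on the complement. The obstruction to any continuous inversion is then purely topological: approach-sequences to $0^\omega$ in the range split into two sub-families whose unique preimages in the domain converge respectively to $0^\omega$ and to $10^\omega$, leaving no continuous way to extend a section through the collapsed point.

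Concretely, define $f$ by
\[
f(0^n\,1\,y) = 0^{2n-1}\,1\,y\quad (n\ge 1),\qquad f(10^n\,1\,y) = 0^{2n}\,1\,y\quad (n\ge 0),
\]
together with $f(0^\omega)=f(10^\omega)=0^\omega$. The monotone representation $\hat f$ sends $0^n\mapsto 0^n$ and $10^n\mapsto 0^{2n}$ for $n\ge 1$, extends identity-on-tail after the first $1$, and satisfies $|\hat f(\sigma)|\to\infty$ monotonically; hence $f$ is total computable and continuous. A case check shows $f$ is surjective with the unique non-singleton fiber $f^{-1}(0^\omega)=\{0^\omega,10^\omega\}$. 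For randomness-preservation, any random $x$ avoids the computable null set $\{0^\omega,10^\omega\}$, so $x$ lies in some clopen $[0^n 1]$ or $[10^n 1]$; writing $x=\sigma z$ with finite prefix $\sigma$ and tail $z$, the tail $z$ is random, and $f(x)=\sigma' z$ differs from $x$ only in its finite left-prefix, hence is random by prefix-invariance of Martin-L\"of randomness.

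Suppose for contradiction that $g$ is continuous with $f\circ g=\mathrm{id}$. Then $g(0^\omega)\in\{0^\omega,10^\omega\}$. Consider the two range sequences $y_n=0^{2n}\,1\,0^\omega$ and $y'_n=0^{2n-1}\,1\,0^\omega$ (for $n\ge 1$), both converging to $0^\omega$. Their unique $f$-preimages are $10^n\,1\,0^\omega\to 10^\omega$ and $0^n\,1\,0^\omega\to 0^\omega$ respectively, so continuity of $g$ at $0^\omega$ would force $g(0^\omega)$ to equal simultaneously the limits of $g(y_n)$ and of $g(y'_n)$, which are distinct; whichever of $0^\omega,10^\omega$ the value $g(0^\omega)$ is, one of the two sequences witnesses the failure. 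The only delicate point in the construction is the parity-splitting on the codomain---sending odd-length $0$-prefixes to the $0^\omega$-branch and even-length ones to the $10^\omega$-branch---which is what routes two disjoint families of approach-sequences through the two collapsed preimages and thereby blocks every continuous inversion simultaneously.
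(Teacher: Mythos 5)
Your proposal is correct, but it proves the theorem by a genuinely different route than the paper. You obtain the non-existence of a continuous inversion from a purely topological obstruction: $f$ collapses the two computable limit points $0^\omega$ and $10^\omega$ onto $0^\omega$ while being a (computable) bijection elsewhere, and the parity-splitting of the leading block of zeros routes two sequences $y_n\to 0^\omega$, $y'_n\to 0^\omega$ in the range through preimages converging to the two distinct points of the fiber, so no single value of $g(0^\omega)$ can be compatible with continuity. All the verifications you sketch (totality, surjectivity, the monotone representation, unique fibers off $0^\omega$, randomness-preservation via tail-randomness and prefix-invariance) go through. The paper instead takes a computability-theoretic route: it relativizes its earlier construction to get $h^z$ whose every inversion computes $z'$, sets $f(x\oplus z)=h^z(x)\oplus z$, and notes that a continuous inversion would be $w$-computable for some $w$, forcing $w'\leq_T w$. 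The trade-off is real: the paper's example is robustly hard to invert --- even probabilistic or positive-measure inversions compute $\zjd$ --- which matters for the surrounding results, whereas your $f$ is computably invertible on the open co-countable set $\twome\setminus\{0^\omega\}$ (read off the parity of the leading zeros), so it witnesses only the literal statement ``no continuous inversion.'' What your approach buys is simplicity and self-containment: it needs no jump machinery, no appeal to the earlier paper, and makes transparent that the stated theorem is really a statement about the impossibility of a continuous global section through a collapsed fiber rather than about computational hardness.
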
\begin{proof}
Let $(z'_s)$ be the universal enumeration of the jump $z'$ of $z$ and set
\begin{align*}
p^z(\langle n,s\rangle)\ &:=\ \begin{cases}
\ \ 2n & \textrm{if $n\in z'_{s+1}-z'_s$} \\[0.2cm]
2\langle n,s\rangle+1 & \text{otherwise}
\end{cases} \\[0.2cm]
h^z(x)(\langle n,s\rangle)\ &:=\ x(p^z(\langle n,s\rangle))
\end{align*}

By the relativization of \cite[Theorem 4.5]{barmpalias2024computable}:
\begin{enumerate}[(i)]
\item $h^z$ is a $z$-computable $z$-random preserving total surjection
\item if $w$ computes an inversion of $h^z$  then $w\geqT z'$.
\end{enumerate}
Since $z\mapsto h^z$ is effective, by \cite[Lemma 5.5]{barmpalias2024computable} 
the map
\[
f(x\oplus z):=h^z(x)\oplus z
\]
is a total computable random-preserving surjection. 
For contradiction, assume that $g$ is a continuous inversion of $f$. Since $f$ is total, $g$ has $G_\delta$ 
domain and therefore must be $w$-computable for some $w$. Now 
\[
\forall y,z:\ f(g(y\oplus z))=y\oplus z.
\] 
Let $g^z(y)$ be the even bits of $g(y\oplus z)$ so $g^z$ is an inversion of  $h^z$ and
\[
g^z\leqT z\oplus w
\hspace{0.3cm}\textrm{and}\hspace{0.3cm}
\forall z,\ z'\leq_T z\oplus w
\]
by (ii). In particular $w'\leq_T w\oplus w$ which is a  contradiction. 
\end{proof}
Partial inversions of effective maps  on the reals are more tractable. 
\begin{definition}
Given  $f, g:\subseteq \twome\to\twome$ we say that $g$ is a
\begin{itemize}
\item  {\em positive inversion} of $f$ if $\mu(\sqbrad{y}{f(g(y))=y})>0$.
\item  {\em probabilistic inversion} of $f$ if $\mu(\sqbrad{y\oplus r}{f(g(y\oplus r))=y})>0$.
\end{itemize}
\end{definition}
By \cite[Theorem 4.5]{barmpalias2024computable} there is a 
total computable random-preserving surjection $f$ such that every oracle that 
computes a probabilistic inversion of $f$ computes $\zjd$.
We show that this complexity bound is optimal.

\begin{theorem}\label{Sv7ueQvOq6}
Each total computable $f:\twome\to\twome$ with positive range has a $\zjd$-computable positive inversion.
\end{theorem}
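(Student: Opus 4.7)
The plan is to build, using $\emptyset'$ as oracle, a monotone representation $\hat g:\twolo\to\twolo$ of a continuous partial inverse $g$ of $f$ whose domain has positive measure. By Proposition \ref{rangePi01w}(i) applied with $P=\twome$, the range $R:=f(\twome)$ is a $\Pi^0_1$ class, and $\mu(R)>0$ by hypothesis; these are the only structural facts I will use.

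Given $\hat g(\sigma)$, for each $b\in\{0,1\}$ I ask the $\Pi^0_1$ question
\[
f([\hat g(\sigma)])\cap [\sigma\cdot b]\neq\emptyset,
\]
which is decidable from $\emptyset'$ since non-emptiness of a $\Pi^0_1$ class is $\Pi^0_1$. If the answer is negative, $\hat g(\sigma\cdot b)$ is undefined. If positive, I set $\hat g(\sigma\cdot b)$ to be the longest $\rho\succeq \hat g(\sigma)$ with $[\rho]\supseteq f^{-1}([\sigma\cdot b])\cap [\hat g(\sigma)]$, i.e.\ the longest common input-prefix of all preimages of $[\sigma\cdot b]$ inside $[\hat g(\sigma)]$. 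This $\rho$ is well-defined and $\emptyset'$-computable (one asks, for each candidate bit extension $\rho\cdot c$, the $\Pi^0_1$ question whether $[\rho\cdot c]$ still covers the relevant preimages). The key virtue of the ``maximal common prefix'' rule is that it commits bits of the preimage only when they are forced by the outputs seen so far: this is what will prevent catastrophic loss of measure.

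Define $g(y)=\lim_n \hat g(y\restriction n)$ for every $y$ such that $\hat g(y\restriction n)\downarrow$ for all $n$ and $|\hat g(y\restriction n)|\to\infty$; call this set $D$. By construction $g$ is $\emptyset'$-computable, and along any $y\in D$ one has $f(g(y))\succeq \hat f(\hat g(y\restriction n))\succeq y\restriction n$ for every $n$, so $f(g(y))=y$. Hence, provided $\mu(D)>0$, $g$ is the desired $\emptyset'$-computable positive inversion.

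The main obstacle is precisely establishing $\mu(D)>0$. The dangerous $y$'s are those with multiple preimages whose longest common prefix is finite, because then $|\hat g(y\restriction n)|$ stabilises and $y\notin D$. To handle this I would either (a) analyse the measure directly: by $\mu(R)>0$ and Lebesgue density, $\mu$-a.e.\ $y\in R$ is a density point, and at such $y$ the preimage set $f^{-1}(\{y\})$ concentrates (modulo null sets) on a single branch, so the stalling outputs form a $\mu$-null subset of $R$ and $\mu(D)\geq \mu(R)$; or (b) reduce to the injective case by first using $\emptyset'$ to carve out a $\emptyset'$-computable subtree $S\subseteq\twolo$ such that $\hat f$ is injective on each level of $S$ (ensuring pairwise incomparable $\hat f(\sigma)$'s) while $\mu(f([S]))$ stays above a fixed threshold, using the Kurtz-type argument of Proposition \ref{Pi010ds} applied to $R$ to guarantee enough measure to work with. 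On such an $S$, $f\restriction [S]$ is a homeomorphism onto its image $f([S])$ (a $\Pi^0_1(\emptyset')$ class of positive measure by Proposition \ref{rangePi01w}(i) relativised), and its inverse is automatically $\emptyset'$-computable via the representation $\hat g$ above. The bulk of the work is in route (b), where the delicate point is designing the level-by-level selection so that both injectivity and the measure lower bound can be maintained simultaneously.
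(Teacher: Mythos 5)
There is a genuine gap: the part you yourself flag as ``the main obstacle'' --- showing $\mu(D)>0$ --- is exactly the content of the theorem, and neither of your two routes closes it. Route (a) is false as stated. Consider the total computable map $f(x)(n)=x(n+1)$ (drop the first bit): its range is all of $\twome$, so every $y$ is a density point of $R$, yet every fiber is $\{0y,1y\}$ and never ``concentrates on a single branch''. Lebesgue density of $y$ in the range says nothing about the structure of the fiber $f^{-1}(y)$. Moreover, on this example your forced-prefix rule stalls immediately: for every $\sigma$ one has $f^{-1}(\llbracket\sigma\rrbracket)=\llbracket 0\sigma\rrbracket\cup\llbracket 1\sigma\rrbracket$, whose longest common prefix is the empty string, so $\hat g(\sigma)=\estring$ for all $\sigma$ and $D=\emptyset$, even though $f$ is trivially invertible by $y\mapsto 0y$. (There is also a smaller technical issue: when a fiber is a singleton, ``the longest forced $\rho$'' does not exist and the $\zjd$-search for it never halts, so $\hat g$ needs a length cap to be well defined; that is fixable, but the measure problem is not.) Route (b) --- carving out a $\zjd$-computable subtree on which $f$ is injective while keeping the \emph{image} measure bounded below --- is a reasonable plan, but it is precisely the delicate selection problem whose solution is the theorem; you give no mechanism for maintaining level-wise incomparability of the $\hat f$-values without destroying the measure of $f([S])$, and controlling the measure of the image (rather than of $[S]$ itself) is the whole difficulty.

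For comparison, the paper's proof avoids any tree construction: since $f$ is total computable, $\sqbrad{x\oplus y}{x\in f^{-1}(y)}$ is $\pz$, so the fibers are uniformly $\pz(y)$ and some fixed functional $\Phi$ computes a member of $f^{-1}(y)$ from $y'$ (leftmost-path selection, relativized). By \cite[Theorem 8.14.5]{rodenisbook}, $y'\leqT y\oplus\zj$ for almost every $y$; since $\mu(f(\twome))>0$, countable additivity yields a single functional $\Psi$ with $\Psi(y\oplus\zj)=y'$ on a positive-measure set of $y\in f(\twome)$, and $g(y)=\Phi(\Psi(y\oplus\zj))$ is the desired $\zjd$-computable positive inversion. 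Note that this automatically produces an injective selection on a positive-measure set, so if you want to rescue route (b), the leftmost-preimage-via-$y'$ idea together with the almost-everywhere bound $y'\leqT y\oplus\zj$ is the missing ingredient; without something of that kind your construction does not get off the ground.
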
\begin{proof}
Clearly $f\inv(y)\in \pz(y)$ uniformly in $y$: $\sqbrad{x\oplus y}{x\in f\inv(y)}\in \pz$.

So there exists a Turing functional $\Phi$ such that 
\[
\forall y\in f(\twome):\ \Phi(y')\in f\inv(y).
\]
By \cite[Theorem 8.14.5]{rodenisbook}
we have $y'\leqT y\oplus \zj$ for almost every $y$. 
Since $f(\twome)$ is positive, by countable additivity there exists a Turing functional $\Psi$ with 
\[
\mu(L)>0
\hspace{0.2cm}\textrm{where}\hspace{0.3cm}
L:=\sqbrad{y\in f(\twome)}{\Psi(y\oplus \zj)=y'}.
\]
Let $g(y)=\Phi(\Psi(y\oplus\zj))$ so $g\leqT \zjd$ and $g$ inverts $f$ on
\[
\sqbrad{y\in f(\twome)}{g(y)\in f\inv(y)}=\sqbrad{y}{f(g(y))=y}\supseteq L.
\]
Since $\mu(L)>0$ this is a positive subset of $f(\twome)$.
\end{proof}
We can now characterize the complexity of positive and probabilistic inversions of total computable
functions $f:\twome\to\twome$ with positive range.
\begin{corollary}\label{smjFFKVqRN}
The following are equivalent for $w\in\twome$:
\begin{enumerate}[(i)]
\item $w$ computes a positive inversion of every total computable $f$
\item $w$ computes a probabilistic inversion of every total computable $f$
\item $w$ computes $\zjd$
\end{enumerate}
where $f:\twome\to\twome$ is assumed to have positive range.
\end{corollary}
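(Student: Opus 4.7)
The plan is to prove the three-way equivalence by arguing $(iii) \Rightarrow (i) \Rightarrow (ii) \Rightarrow (iii)$, with the bulk of the content already being carried by Theorem \ref{Sv7ueQvOq6} and the cited \cite[Theorem 4.5]{barmpalias2024computable}.

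First I would handle $(iii) \Rightarrow (i)$: this is an immediate application of Theorem \ref{Sv7ueQvOq6}, which produces a $\zjd$-computable positive inversion for any total computable $f:\twome\to\twome$ of positive range. Since $w\geqT\zjd$, by relativization (or trivial forwarding) $w$ also computes such an inversion.

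Next, for $(i) \Rightarrow (ii)$, the idea is a trivial padding. Given a positive inversion $g$ of $f$ with $\mu(\sqbrad{y}{f(g(y))=y})>0$, define $g'(y\oplus r):=g(y)$, which ignores the random bits $r$. Then $g'\leqT g\leqT w$, and
\[
\sqbrad{y\oplus r}{f(g'(y\oplus r))=y}\ =\ \sqbrad{y}{f(g(y))=y}\times\twome
\]
(identified with the doubled Cantor space) has positive $\mu$-measure by Fubini. So $g'$ is a probabilistic inversion of $f$ computable from $w$. Since $f$ was arbitrary, $w$ computes a probabilistic inversion of every total computable $f$ of positive range.

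Finally, $(ii) \Rightarrow (iii)$ is exactly the content of \cite[Theorem 4.5]{barmpalias2024computable}, which was already invoked just before the statement of Theorem \ref{Sv7ueQvOq6}: there is a total computable (random-preserving) surjection $f^\ast:\twome\to\twome$ such that every oracle computing a probabilistic inversion of $f^\ast$ computes $\zjd$. Instantiating (ii) with $f=f^\ast$ yields $w\geqT \zjd$.

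There is no real obstacle here; the potentially confusing point is making sure that $(i) \Rightarrow (ii)$ really is the easy direction (it is, via padding), and that the negative complexity bound in $(ii) \Rightarrow (iii)$ is the hard content, already established elsewhere.
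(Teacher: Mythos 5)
Your proposal is correct and follows essentially the same route as the paper: the cycle (iii)$\to$(i) via Theorem \ref{Sv7ueQvOq6}, (ii)$\to$(iii) via \cite[Theorem 4.5]{barmpalias2024computable}, and (i)$\to$(ii), which the paper simply calls immediate and you spell out correctly with the padding map $g'(y\oplus r):=g(y)$ and Fubini. No gaps.
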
\begin{proof}
By \cite[Theorem 4.5]{barmpalias2024computable} mentioned above we get (ii)$\to$(iii) while (i)$\to$(ii)
is immediate. Theorem \ref{Sv7ueQvOq6} gives (iii)$\to$(i).
\end{proof}
It is not hard to show that $\zjd$ does not compute a positive inversion of some  partial computable $f:\twome\to\twome$
with positive range.
We show that under a mild condition on $f$ oracle $\zjjd$ is sufficient for this purpose. 
The condition is that $f$ maps to random reals with positive probability, which is weaker than randomness-preservation and
present in Levin's  oneway functions \cite{barmpalias2024computable}.

\begin{theorem}\label{smaller than 0''}
Every  partial computable $f$ with $\mu(f^{-1}(\mathsf{ML}))>0$ has a 
positive inversion which is computable in $\zjjd$.
\end{theorem}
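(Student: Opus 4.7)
The strategy is to follow the argument for Theorem \ref{Sv7ueQvOq6} one jump level higher, after first restricting $f$ to a $\pz(\zj)$ class of $2$-randoms on which it behaves like a total random-preserving function. Writing $\mathsf{ML} = \bigcup_c \mathsf{ML}_c$ with each $\mathsf{ML}_c \in \pz$, countable additivity yields a $c$ with $\mu(A) > 0$, where $A := \dom(f) \cap f^{-1}(\mathsf{ML}_c)$; since ``$f(x) \in \mathsf{ML}_c$'' is $\pz$ uniformly in $x \in \dom(f)$ via the representation $\hat{f}$, the class $A$ is $\pzt$. The Kurtz argument from Proposition \ref{Pi010ds} then supplies a positive $\pz(\zj)$ subclass $P \subseteq A$ consisting of $2$-randoms, and Propositions \ref{domPi02}(ii) and \ref{rangePi01w}(i) give that $f|_P$ is partial computable with $f(P) \in \pz(\zj)$.

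The main obstacle is to show $\mu(f(P)) > 0$, because $\{y : f(g(y)) = y\} \subseteq f(P)$ and without positive image measure no positive inversion can exist. For $x \in P$, the real $x$ is $\zj$-random (being $2$-random) and $f(x) \in \mathsf{ML}_c \subseteq \mathsf{ML}$ is Turing below $x$, so by \cite[Theorem 4.3]{milleryutran} (as invoked after Definition 2.7) $f(x)$ is $\zj$-random; hence $f(P) \subseteq \mathsf{ML}^{\zj}$. If $\mu(f(P)) = 0$ then the non-empty $\pz(\zj)$ class $f(P)$ would itself be a $\zj$-null $\pz(\zj)$ cover and thus avoid every $\zj$-random, a contradiction. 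Therefore $\mu(f(P)) > 0$.

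With this in hand, the Theorem \ref{Sv7ueQvOq6} argument relativizes cleanly to the oracle $\zj$. Uniformly in $y$, the class $f^{-1}(y) \cap P$ is $\pz(y \oplus \zj)$ and is non-empty iff $y \in f(P)$, so the leftmost-path procedure yields a Turing functional $\Phi$ with $\Phi((y \oplus \zj)') \in f^{-1}(y) \cap P$ whenever $y \in f(P)$. Relativizing \cite[Theorem 8.14.5]{rodenisbook} to $\zj$ gives $(y \oplus \zj)' \leqT y \oplus \zjj$ for almost every $y$, and countable additivity over Turing functional indices produces a single $\Psi$ for which $L := \{y \in f(P) : \Psi(y \oplus \zjj) = (y \oplus \zj)'\}$ has positive measure. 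Setting $g(y) := \Phi(\Psi(y \oplus \zjj))$ then yields $g \leqT \zjjd$ with $f(g(y)) = y$ for all $y \in L$, giving the required positive inversion. The extra jump compared with Theorem \ref{Sv7ueQvOq6} enters precisely because $P$ must be defined using the oracle $\zj$ rather than computably.
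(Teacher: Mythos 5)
Your proposal is correct and follows essentially the same route as the paper's proof: restrict $f$ (via the Kurtz argument of Proposition \ref{Pi010ds}) to a positive $\Pi^0_1(\emptyset')$ class of $2$-randoms, note that $f^{-1}(y)$ relative to this restriction is $\Pi^0_1(y\oplus\emptyset')$ uniformly so an inverse point is computable from $(y\oplus\emptyset')'$, apply the relativized a.e.\ jump inversion $(y\oplus\emptyset')'\leq_T y\oplus\emptyset''$ together with countable additivity, and use Miller--Yu to see that the image class of $2$-randoms is a positive $\Pi^0_1(\emptyset')$ class. Your extra step of first intersecting the domain with $f^{-1}(\mathsf{ML}_c)$ before applying Kurtz is a small but worthwhile refinement, since it guarantees outright that every point of the chosen class maps to a random and hence that the image has positive measure.
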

\begin{proof}
By Proposition \ref{Pi010ds} (ii)  we may assume that $\dom(f)\in \pzj$. Then  
\begin{equation}\label{XLGMcemu83}
\sqbrad{x\oplus y}{x\in f\inv(y)}\in \pzj
\end{equation}
so $f\inv(y)\in \pz(y\oplus \zj)$ uniformly in $y$ and for some Turing functional $\Phi$:
\[
\forall y\in f(\twome):\ \Phi((y\oplus \zj)')\in f\inv(y).
\]
By \cite[Theorem 8.14.5]{rodenisbook} for almost all $y$ we have $(y\oplus \zj)'\equivT y''\equivT  y\oplus \zjj$.

Since $\mu(f^{-1}(\mathsf{ML}))>0$  there exists a 2-random $x$ such that $f(x)$ is random. By 
\cite[Theorem 4.3]{milleryutran} and $f(x)\leqT x$ 
we get that $f(x)$ is a 2-random.

Since $f(\twome)\in \pzj$ we have $\mu(f(\twome))>0$
and  by countable additivity  there exists a Turing functional $\Psi$ with
$\mu(L)>0$ where 
\[
L:=\sqbrad{y\in f(\twome)}{\Psi(y\oplus \zjj)=(y\oplus \zj)'}.
\]
Let $g(y)=\Phi(\Psi(y\oplus\zjj))$ so $g\leqT \zjjd$ and $g$ inverts $f$ on
\[
\sqbrad{y\in f(\twome)}{g(y)\in f\inv(y)}=\sqbrad{y}{f(g(y))=y}\supseteq L.
\]
Since $\mu(L)>0$ this is a positive subset of $f(\twome)$.
\end{proof}

\section{Deterministic and probabilistic inversion}\label{SuyZFTF6y}
A partial computable $f:\subseteq\twome\to\twome$ is 
{\em nowhere effectively invertible} if 
\begin{equation}\label{qdhCBuFxck}
\textrm{$x\not\leqT f(x)$\ \  for all $x\in\dom(f)$.}
\end{equation}
In \S\ref{GDROTpaAVG} we show that this basic hardness for inversion 
condition is not hard to satisfy, even in the presence of additional requirements.
However these functions are invertible with the help of a random oracle, in certain ways. 
Hardness with respect to probabilistic inversions is explored in \S\ref{we2V38B7n1}.

\subsection{Inversion without random oracles}\label{GDROTpaAVG}
The simplest example of a nowhere effectively invertible partial computable 
function is the projection $f:\ x\oplus z\mapsto x$ restricted to a positive $\Pi^0_1$ class $P$ of randoms.
However $f$ is {\em  a.e.\  probabilistically invertible} in the sense that
\begin{equation}\label{tlqpsxl19f}
 \mu(\sqbrad{y\oplus r}{f(g(y\oplus r))=y})=\mu(f(\{0,1\}^\omega))>0
\end{equation}
for some partial computable $g$.

\begin{prop}
There is a partial computable $f:\subseteq\twome\to\twome$  with
\begin{enumerate}[(i)]
\item $f$ is random-preserving and nowhere effectively invertible 
\item $f$ is  a.e.\  probabilistically invertible.
\end{enumerate}
\end{prop}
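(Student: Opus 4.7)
The plan is to take $f$ to be the projection $f:x\oplus z\mapsto x$ restricted to a positive $\Pi^0_1$ class $P\subseteq\twome$ of randoms (such $P$ exists, e.g., as the complement of a sufficiently advanced level of a universal Martin-L\"of test). Since $P\in \Pi^0_1\subseteq \Pi^0_2$, Proposition \ref{domPi02}(ii) makes $f$ partial computable with $\dom(f)=P$. Clause (i) then follows entirely from van Lambalgen's theorem: for any $x\oplus z\in P$ the real $x\oplus z$ is random, so $x$ is random (hence $f$ is random-preserving) and $z$ is $x$-random (hence $z\not\leq_T x$, so $x\oplus z\not\leq_T x=f(x\oplus z)$, giving nowhere-effective invertibility).

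For clause (ii) I would exploit the $y$-slice $P_y:=\sqbrad{z}{y\oplus z\in P}$, a $\Pi^0_1(y)$ class obtained uniformly from the representation of $P$, together with the relativized Kucera-G\'acs theorem. The key subclaim is: \emph{if $y$ is random and $P_y\neq\emptyset$, then $\mu(P_y)>0$.} For any $z\in P_y$ the real $y\oplus z\in P$ is random so $z$ is $y$-random by van Lambalgen; but a $\Pi^0_1(y)$ class of measure zero is the intersection of a descending sequence of clopen sets whose measures are $y$-computable and tend to zero, giving a Martin-L\"of test relative to $y$ that covers the class and contradicts $y$-randomness of $z$. With this in hand, fix a Turing functional $\Psi$ supplied by relativized Kucera-G\'acs such that $\Psi^{y\oplus r}\in P_y$ whenever $r$ is $y$-random and $\mu(P_y)>0$, and set $g(y\oplus r):=y\oplus\Psi^{y\oplus r}$, which is partial computable.

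Finally I would verify $\mu(\sqbrad{y\oplus r}{f(g(y\oplus r))=y})=\mu(f(\twome))$. The containment $\sqbrad{y\oplus r}{f(g(y\oplus r))=y}\subseteq\sqbrad{y\oplus r}{y\in f(P)}$ yields the upper bound $\mu(f(P))$ by Fubini. For the matching lower bound, for a.e.\ $y\oplus r\in\twome$ the real $y\oplus r$ is random, so $y$ is random and $r$ is $y$-random; if moreover $y\in f(P)$, the subclaim guarantees $\mu(P_y)>0$ and Kucera-G\'acs gives $\Psi^{y\oplus r}\in P_y$, so $g(y\oplus r)\in P$ with $f(g(y\oplus r))=y$. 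The main difficulty is precisely the subclaim: were $P_y$ allowed to be a non-empty null $\Pi^0_1(y)$ class on a positive-measure set of random $y\in f(P)$, the Kucera-G\'acs-based $g$ would fall short of the required measure $\mu(f(P))$, and the randomness of $P$ is exactly what rules this out.
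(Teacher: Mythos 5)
Your choice of $f$ and your treatment of clause (i) are exactly the paper's: the projection restricted to a positive $\Pi^0_1$ class $P$ of randoms, with van Lambalgen giving both randomness-preservation and $x\oplus z\not\leqT x$. The gap is in clause (ii), at the step where you ``fix a Turing functional $\Psi$ supplied by relativized Kučera--G\'acs such that $\Psi^{y\oplus r}\in P_y$ whenever $r$ is $y$-random and $\mu(P_y)>0$.'' No such single functional is supplied by the results you invoke (the relevant theorem is Kučera's hitting theorem for positive-measure $\Pi^0_1$ classes, not Kučera--G\'acs, which says every real is computed by some random). Kučera's theorem is non-uniform in two ways: the reduction from a $y$-random oracle $r$ to a member of $P_y$ depends on $r$ itself (which tail of $r$ lands in the class) and on a known positive lower bound for $\mu(P_y)$, which varies with $y$ and has no uniform bound over $y\in f(\twome)$. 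What your verification needs is far stronger: one partial computable $g$ that succeeds for almost every $r$, for almost every $y$ in the range --- an almost-everywhere hitting of a $\Pi^0_1(y)$ class consisting entirely of $y$-randoms. The standard machinery gives at best, via countable additivity, a single functional succeeding on a set of oracles of positive (not full) measure, which would only yield inversion with positive probability, not the equality with $\mu(f(\twome))$ you are after; and if a single partial computable map did send almost every $r$ into $P_y$, the pushforward of $\mu$ would be a measure computable in $y$ and concentrated on a closed class of $y$-randoms --- a strong claim that nothing you cite provides. Your subclaim (nonempty slices at random $y$ have positive measure) is correct and standard, but it is not where the difficulty lies.

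The paper's own proof avoids all of this by taking $g(y\oplus r):=y\oplus r$: then $f(g(y\oplus r))=y$ precisely when $y\oplus r\in P$, so the inversion succeeds on the whole domain $P$, which has positive measure, and no Kučera-type hitting is needed. Note also that the exact equality you aim for is a delicate target: each slice $P_y$ is a closed set of $y$-randoms, hence $\mu(P_y)<1$, so $\mu(P)$ is strictly smaller than $\mu(f(\twome))$ and the identity inversion attains only $\mu(P)$; attaining the full value $\mu(f(\twome))$ is exactly the almost-everywhere hitting problem described above, which your proposal does not solve.
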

\begin{proof}
Let $f$ be the restriction of $x\oplus z\mapsto x$
to a positive $\Pi^0_1$ class $P$ of randoms. 
Then $f$ is partial computable, random-preserving and
\[
x\oplus z\in P\impl x\oplus z\not\leqT x
\]
by van Lambalgen's theorem, so $f$ is nowhere effectively invertible. 
By Fubini's theorem \eqref{tlqpsxl19f} holds for $g(y\oplus r):=y\oplus r$ so (ii) holds.
\end{proof}

We turn to examples with null (and nonempty) domain or range. 

\begin{prop}\label{k6y6bnVCO8}
There are partial computable $f, h:\subseteq\twome\to\twome$  with 
\begin{enumerate}[(i)]
\item $f$ is injective and no arithmetically definable $g$ can invert $f$ on any real
\item $\dom(h)$ is uncountable and no $g\in \Delta^1_1$ can invert $h$ on any real.
\end{enumerate}
\end{prop}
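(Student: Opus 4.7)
The plan is to reduce both parts to a single construction schema: restrict the computable constant function $x\mapsto 0^\omega$ to a suitable $\pzt$ class via Proposition \ref{domPi02}(ii), arranging that the unique range point is $0^\omega$, so that any continuous inversion $g$ is forced to satisfy $g(0^\omega)\in \dom(\cdot)$ and therefore to output a real of impossibly high complexity for its definability class. Since $0^\omega$ is computable, the complexity of $g(0^\omega)$ is controlled by the complexity of a representation of $g$.

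For (i) I would take $z:=\emptyset^{(\omega)}$. Since $\omega<\omega_1^{\rm CK}$, the singleton $\{z\}$ is $\pzt$ by the result from \cite[Theorem 2.1.4]{chong2015recursion} used in the proof of Proposition \ref{xtZEktBm2g}(i), and $z$ is not arithmetical because $z\geqT \emptyset^{(n)}$ for every $n$. Restricting the total computable function $x\mapsto 0^\omega$ to $\{z\}$ by Proposition \ref{domPi02}(ii) yields a partial computable $f$ with $\dom(f)=\{z\}$ (hence trivially injective) and $f(z)=0^\omega$. Any continuous $g$ that inverts $f$ on some real must do so at the unique range point $y=0^\omega$, so $g(0^\omega)=z=\emptyset^{(\omega)}$. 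If $g$ were arithmetically definable, one could take its representation to be arithmetical, so $g$ would be $w$-computable for some arithmetical oracle $w$; then $g(0^\omega)\leqT 0^\omega\oplus w\equivT w$ would be arithmetical, contradicting the non-arithmeticity of $\emptyset^{(\omega)}$.

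For (ii) I would extract from the proof of Proposition \ref{xtZEktBm2g}(ii) an uncountable $\pzt$ class $P\subseteq\twome$ with $P\cap \Delta^1_1=\emptyset$, and let $h$ be the restriction of $x\mapsto 0^\omega$ to $P$, which is partial computable by Proposition \ref{domPi02}(ii). The range is $\{0^\omega\}$, so any $g$ inverting $h$ on some real must satisfy $g(0^\omega)\in P$. If $g\in \Delta^1_1$, the same representation argument gives $g(0^\omega)\leqT w$ for some $\Delta^1_1$ oracle $w$, hence $g(0^\omega)\in \Delta^1_1$, contradicting $P\cap \Delta^1_1=\emptyset$. The main point that requires care is the folklore fact that a continuous function definable at the arithmetical (respectively $\Delta^1_1$) level admits a representation $\hat{g}$ in the same class, and is therefore $w$-computable for some oracle $w$ there; once this is pinned down, the complexity count on $g(0^\omega)$ is routine.
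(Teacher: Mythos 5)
Your proposal is correct and is essentially the paper's own proof: both parts restrict the constant map $x\mapsto 0^\omega$ to $\{\emptyset^{(\omega)}\}$ (via Proposition \ref{xtZEktBm2g}(i) and Proposition \ref{domPi02}(ii)) and to an uncountable $\pzt$ class with no $\Delta^1_1$ members, respectively, and then note that any inverter evaluated at $0^\omega$ would produce a real too complex for its definability class. Your extra remarks about obtaining a representation of $g$ in the same definability class just spell out the step the paper leaves implicit ("any $g$ that inverts $f$ on $0^\omega$ defines $\emptyset^{(\omega)}$").
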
\begin{proof}
By Proposition \ref{xtZEktBm2g} (i)
$\{\emptyset^{(\omega)}\}\in \Pi^0_2$ so by Proposition \ref{domPi02} (ii)
\[
f:\emptyset^{(\omega)}\mapsto 0^\omega
\hspace{0.3cm}\textrm{is partial computable.}
\]
Any $g$ that inverts $f$ on $y=0^\omega$ defines $\emptyset^{(\omega)}$ which is not arithmetical.

By Proposition \ref{xtZEktBm2g} (ii) let $P$ be an uncountable $\pzt$ class with no $\Delta^1_1$ members.
The restriction of $x\mapsto 0^{\omega}$ to $P$ meets the requirements of (ii).
\end{proof}

A nowhere effectively invertible partial computable $f:\subseteq\twome\to\twome$  with
\[
\textrm{positive \ $\dom(f)\subseteq\RAND$ \ \ \ and\ \ \  $f(\twome)\cap \RAND=\emptyset$}
\]
can be given by \citet{Kurtz81} (see \cite[Theorem 8.21.3]{rodenisbook})
who constructed $f$ with the above properties and every real in its range is 1-generic.
Since 1-generics do not compute randoms \cite[Theorem 8.20.5]{rodenisbook}, $f$ is nowhere effectively invertible.

We end with the symmetric case of null domain and positive range.
\begin{prop}\label{8zsKJ9qshV}
There is a partial computable $f:\subseteq\twome\to\twome$ with
\begin{enumerate}[(i)]
\item $f$ is nowhere effectively invertible 
\item $\dom(f)\cap\RAND=\emptyset$ and  $f(\twome)\subseteq \RAND$ and $\mu(f(\twome))>0$.
\end{enumerate}
\end{prop}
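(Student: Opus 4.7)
The plan is to build $f$ as a restriction of the bit-projection $x\oplus z\mapsto x$ to a $\Pi^0_2$ domain $D$ obtained by tagging a positive class of $2$-randoms with the fixed nonrandom sequence $\zj$ in the odd coordinates. This forces $\dom(f)$ to be disjoint from $\RAND$ while keeping the range a positive-measure class of randoms, and it turns the hardness of inversion into the standard fact that no $2$-random computes $\zj$.

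First, apply Kurtz's theorem as used in Proposition \ref{Pi010ds} (to the whole space $\twome$) to pick a positive $\Pi^0_1(\zj)$ class $R$ all of whose members are $2$-random, and set
\[
D\ :=\ \sqbrad{x\oplus\zj}{x\in R}.
\]
Since $\zj\in\Delta^0_2$, the condition that the odd bits of $w$ equal $\zj$ is $\Pi^0_2$ in $w$, and the condition that the even bits of $w$ lie in $R$ is $\Pi^0_1(\zj)\subseteq\Pi^0_2$; their conjunction shows $D$ is $\Pi^0_2$. By Proposition \ref{domPi02}(ii), the restriction $f$ of $x\oplus z\mapsto x$ to $D$ is partial computable with $\dom(f)=D$ and $f(\twome)=R$.

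Clause (ii) is then immediate: every $w\in D$ has odd bits equal to $\zj$, which is nonrandom, so $\dom(f)\cap\RAND=\emptyset$; meanwhile $f(\twome)=R\subseteq\RAND$ has positive measure by choice of $R$. For clause (i), if $w=x\oplus\zj\in D$ satisfies $w\leqT f(w)=x$, then $\zj\leqT x$, contradicting the $2$-randomness of $x\in R$. The only non-bookkeeping step is this last appeal to the fact that no $2$-random computes $\zj$, a standard consequence of $\sqbrad{x}{\zj\leqT x}$ being covered by a $\zj$-Martin-L\"of test.
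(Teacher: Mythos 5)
Your proof is correct and follows essentially the same route as the paper: both restrict a projection to a $\Pi^0_2$ class of pairs consisting of the fixed $\Delta^0_2$ sequence $\zj$ coupled with a positive class of $2$-randoms, invoke Proposition \ref{domPi02}(ii) for partial computability, and deduce (i) from the fact that $2$-randoms do not compute $\zj$. The only differences (which coordinate carries $\zj$, and using a $\Pi^0_1(\zj)$ class via Kurtz rather than a positive $\Pi^0_2$ class of $2$-randoms directly) are cosmetic.
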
\begin{proof}
Let $Q\neq\emptyset$ be a positive $\pzt$ class of  2-randoms and
\[
f(x\oplus z):\simeq 
\begin{cases}
z & \textrm{if $x=\zj$ and $z\in Q$}\\
\un &\textrm{otherwise.}
\end{cases}
\]  
Since $x=\zj$ is $\pzt$ by  Proposition \ref{domPi02} (ii) $f$ is  partial computable.
By the choice of $Q$ we get  (ii). Since 2-randoms do not compute $\zj$ we get (i).
 \end{proof}

\subsection{Inversion with random oracles}\label{we2V38B7n1}
In computational complexity oneway functions are effectively calculable finite maps 
that cannot be effectively inverted with positive probability.
Levin \cite{levin2023email} defined oneway functions $f:\subseteq \twome\to\twome$  
to be partial computable with 
\begin{equation}\label{kaL6mSYVwF}
\mu(f\inv(\RAND))>0
\hspace{0.3cm}\textrm{and}\hspace{0.3cm}
\mu(\sqbrad{y\oplus r}{f(g(y\oplus r))=y})=0
\end{equation}
for each partial computable $g$.
A total computable oneway function was constructed in \cite{barmpalias2024computable}.
\citet{Gacs24oneway} constructed a partial computable $f$ with 
\[
\mu(\dom(f))>0
\hspace{0.3cm}\textrm{and}\hspace{0.3cm}
\mu\parb{\sqbrad{x\oplus r}{f(g(f(x)\oplus r))=f(x)}}=0
\]
for each partial computable $g$.
This  differs from \eqref{kaL6mSYVwF} in that
probability refers to the domain rather than the range of $f$.
Another difference is  condition  $\mu(f\inv(\RAND))>0$ which is weaker than randomness preservation
but stronger than $\mu(\dom(f))>0$. This suggests
two  variants of probabilistic inversion.
\begin{definition}
Let $f:\subseteq \{0,1\}^\omega\to \{0,1\}^\omega$ be partial computable. If 
\[
\mu(\dom(f))>0
\hspace{0.3cm}\textrm{and}\hspace{0.3cm}
\mu\parb{\sqbrad{x\oplus r}{f(g(f(x)\oplus r))=f(x)}}=0.
\]
for each partial computable $g$ 
we say that $f$ is \textit{left-oneway}. If 
\[
\mu(f(\{0,1\}^\omega))>0
\hspace{0.3cm}\textrm{and}\hspace{0.3cm}
\mu(\{y\oplus r:f(g(y\oplus r))=y\})=0
\]
for each partial computable $g$ 
we say that $f$ is \textit{right-oneway}.
\end{definition}
Given the absence of $\mu(f^{-1}(\mathsf{ML}))>0$ in the above definition note that
\begin{equation}\label{FDWSHXy1Xn}
\mu(f^{-1}(\mathsf{ML}))>0\underset{\not\Leftarrow}{\Rightarrow} \parb{\mu(\dom(f))>0\wedga \mu(f(\{0,1\}^\omega))>0}.
\end{equation}
The first implication is trivial and by Proposition \ref{Pi010ds} we may assume that the domain and range of $f$ are $\pzj$. 
Since $\mu(f^{-1}(\mathsf{ML}))>0$ there is a 2-random $x$ such that $f(x)$ is random. By
\cite[Theorem 4.3]{milleryutran} we get that $f(x)$ is a 2-random member of $f(\{0,1\}^\omega)$
which is  $\pzj$ so $\mu(f(\{0,1\}^\omega))>0$.

The failure of the converse in \eqref{FDWSHXy1Xn} is demonstrated later in Theorem \ref{vlsrMOASDY}.

Before we separate the classes of left and right oneway functions we show that 
$\mu(f^{-1}(\mathsf{ML}))>0$  is  equivalent (up to restrictions) to randomness-preservation.

\begin{lemma}\label{randPresDomRange}
If $f:\subseteq \{0,1\}^\omega\to \{0,1\}^\omega$ is partial computable then
 $\mu(f^{-1}(\mathsf{ML}))>0$ if and only if $f$ extends a random-preserving partial computable function. 
\end{lemma}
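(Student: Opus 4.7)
The plan is to handle the two directions separately, with the reverse implication being essentially by definition and the forward implication being the substantive one. For the reverse direction, I would suppose that $f$ extends a random-preserving partial computable $h$; by definition $\mu(\dom(h))>0$, and since non-random reals form a null class we have $\mu(\dom(h)\cap\RAND)=\mu(\dom(h))>0$. On this positive-measure set $f$ agrees with $h$ and hence maps into $\RAND$, so $\dom(h)\cap\RAND\subseteq f\inv(\RAND)$, giving $\mu(f\inv(\RAND))>0$.

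For the forward direction, the guiding idea is to decompose $\RAND$ as a countable union of $\pz$ classes and use countable additivity to extract a suitable restriction of $f$. I would fix a universal Martin-L\"of test $(U_n)$, set $V_n:=\twome\setminus U_n$, and note that $\RAND=\bigcup_n V_n$ with each $V_n$ a $\pz$ class. Applying Proposition~\ref{rangePi01w}~(ii) to each $V_n$ gives a $\pz$ class $P_n$ with $f\inv(V_n)=\dom(f)\cap P_n$, and combined with $\dom(f)\in\pzt$ from Proposition~\ref{domPi02}~(i) this makes each $A_n:=f\inv(V_n)$ a $\pzt$ class. Since $f\inv(\RAND)=\bigcup_n A_n$ has positive measure by hypothesis, countable additivity yields some $n$ with $\mu(A_n)>0$.

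Fixing such $n$, I would then let $h$ be the restriction of $f$ to $A_n$. By Proposition~\ref{domPi02}~(ii), $h$ is partial computable, and since $A_n\subseteq\dom(f)$ the construction in that proof gives $\dom(h)=A_n$, so $h$ has positive domain. By construction every $x\in A_n$ satisfies $h(x)=f(x)\in V_n\subseteq\RAND$, so $h$ is random-preserving (in fact in the stronger sense that every input, random or not, is mapped to a random). The only step needing real care is the complexity bound $A_n\in\pzt$, but this is essentially packaged by Proposition~\ref{rangePi01w}~(ii); the rest is bookkeeping together with the observation that random-preservation only constrains the behaviour of $h$ on random inputs, which is amply covered by mapping all of $A_n$ into $V_n$.
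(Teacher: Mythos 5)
Your proof is correct and follows essentially the same route as the paper: decompose $\RAND$ into a countable union of $\Pi^0_1$ classes, use countable additivity to find a component $V_n$ with $\mu(f^{-1}(V_n))>0$, note via Propositions~\ref{rangePi01w}~(ii) and~\ref{domPi02} that this preimage is a $\Pi^0_2$ class on which the restriction of $f$ is partial computable and random-preserving. Your easy direction is in fact slightly leaner than the paper's (you use that non-randoms are null rather than invoking Proposition~\ref{Pi010ds}), but this is a cosmetic difference only.
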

\begin{proof}
Suppose that 
$f$ extends a random-preserving partial computable  $h$.
Then $h$ has a positive domain $P$ and by 
Proposition \ref{Pi010ds} we may assume that $P$ consists of randoms.
Since $h$ is random-preserving  $h^{-1}(\mathsf{ML})=P$ so 
\[
\mu(f^{-1}(\mathsf{ML}))\geq \mu(h^{-1}(\mathsf{ML})) =\mu(P) >0.
\]
For the converse assume that $\mu(f^{-1}(\mathsf{ML}))>0$ so  there is
\begin{itemize}
\item $Q\in \Pi^0_1$ of randoms with $\mu(f^{-1}(Q))>0$
\item  $P\in \pz$ of randoms with $\mu(P\cap f^{-1}(Q))>0$.
\end{itemize}
Let $g$ be the restriction  $f$ to $P\cap f^{-1}(Q)$.
The by Proposition \ref{rangePi01w} (ii) the class $P\cap f^{-1}(Q)$ is $\pzt$ so $g$ is 
partial computable by Proposition \ref{domPi02} (ii).
Since $P\cap f^{-1}(Q)$ is positive and $P, Q$ consist of randoms, $g$ is random-preserving.
\end{proof}

Assuming randomness-preservation right-oneway implies left-oneway.

\begin{theorem}\label{onewayEquiv}
Every  partial computable random-preserving 
right-oneway function $f$ is left-oneway.
\end{theorem}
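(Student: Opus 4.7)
The plan is to prove the contrapositive: given a partial computable $g$ witnessing that $f$ is not left-oneway, the same $g$ will witness that $f$ is not right-oneway. So I would assume $\mu(A)>0$ where $A:=\{x\oplus r : f(g(f(x)\oplus r))=f(x)\}$, and set $B:=\{y\oplus r : f(g(y\oplus r))=y\}$. By construction the partial computable map $F(x\oplus r):=f(x)\oplus r$ sends $A$ into $B$, so it suffices to show $\mu(B)>0$. The auxiliary requirement $\mu(f(\twome))>0$ of right-one-wayness comes for free: by Proposition~\ref{Pi010ds} random-preservation supplies a positive $\pzj$ class $P\subseteq\dom(f)$ of $2$-randoms with $f(P)\in\pzj$; $f(P)$ consists of $2$-randoms by \cite[Theorem 4.3]{milleryutran}, and a $\pzj$ class of $2$-randoms cannot be null.

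I would then argue by contradiction, assuming $\mu(B)=0$. First, $B\in\pzt$: one defines a partial computable $h$ on $z=y\oplus r$ that simulates $g(z)$ and $f(g(z))$ bit by bit while comparing the emerging bits of $f(g(z))$ to those of $y$, outputting a fresh bit of $h(z)$ for every confirmed match. Then $B=\dom(h)$, so $B\in\pzt$ by Proposition~\ref{domPi02}(i). Writing $B=\bigcap_n U_n$ with nested uniformly $\sz$ classes, $\mu(U_n)\searrow 0$, and since $\mu(U_n)\leq 2^{-k}$ is a $\pz$ condition, oracle $\zj$ computes indices $n_k$ with $\mu(U_{n_k})\leq 2^{-k}$. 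Thus $(U_{n_k})_k$ is a Martin-L\"of test relative to $\zj$ covering $B$, so no $2$-random lies in $B$.

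On the other hand, $\mu(A)>0$ and $2$-randoms have full measure, so $A$ contains a $2$-random $x\oplus r$, and necessarily $x\in\dom(f)$. By relativized van Lambalgen $x$ is $\zj$-random and $r$ is $x\oplus\zj$-random. Random-preservation gives $f(x)\in\ML$, and $f(x)\leq_T x$ with $x$ being $\zj$-random lets \cite[Theorem 4.3]{milleryutran} (applied with $w=\zj$) promote this to $f(x)$ being $\zj$-random. Since $f(x)\oplus\zj\leq_T x\oplus\zj$, $r$ is also $f(x)\oplus\zj$-random, and a second application of relativized van Lambalgen shows $f(x)\oplus r$ is $2$-random. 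But $(x,r)\in A$ forces $f(x)\oplus r\in B$, contradicting the absence of $2$-randoms from $B$.

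The main obstacle is the randomness-transfer in the last paragraph: one must upgrade the $2$-randomness of $x\oplus r$ to the $2$-randomness of $f(x)\oplus r$, and this is exactly where random-preservation of $f$ is decisive, in combination with Miller-Yu-Tran and two applications of relativized van Lambalgen. The other ingredients---that $B\in\pzt$ and that every $\pzt$-null class is covered by a Martin-L\"of test relative to $\zj$---are standard but essential for closing the contradiction.
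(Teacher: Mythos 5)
Your proposal is correct and follows essentially the same route as the paper's proof: take a $2$-random $x\oplus r$ in the set witnessing the failure of left-onewayness, transfer $2$-randomness to $f(x)\oplus r$ via (relativized) van Lambalgen, randomness-preservation and Miller--Yu--Tran, and conclude that the $\Pi^0_2$ class $\{y\oplus r: f(g(y\oplus r))=y\}$ has positive measure because it contains a $2$-random. The additional details you spell out (that this class is $\Pi^0_2$, and that a null $\Pi^0_2$ class is covered by a $\emptyset'$-Martin-L\"of test and so contains no $2$-randoms) are exactly what the paper leaves implicit, and your remark about $\mu(f(\{0,1\}^\omega))>0$ is harmless but not needed for refuting right-onewayness.
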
\begin{proof}
Suppose that $f$ is random-preserving so it has positive domain.

If $f$ is not left-oneway there is a partial computable function $g$ with 
\[
\mu(L)>0
\hspace{0.3cm}\textrm{where}\hspace{0.3cm}
L:=\{x\oplus r:f(g(f(x)\oplus r))=f(x)\}
\]  
To show that $f$ is not right-oneway it suffices to show that
\[
\mu(R)>0
\hspace{0.3cm}\textrm{where}\hspace{0.3cm}
R:=\{y\oplus r:f(g(y\oplus r))=y\}.
\]  
Since $\mu(L)>0$ there is a $2$-random $x\oplus r\in L$. 
By van Lambalgen's theorem $x$ is $r\oplus\emptyset'$ random. Since $f$ is random-preserving $f(x)$ is random
and since $f(x)\leqT x$ by \cite[Theorem 4.3]{milleryutran} it is also 
$r\oplus\emptyset'$-random. By van Lambalgen's theorem $f(x)\oplus r$ is a $2$-random member of the $\pzt$ class $R$ so $\mu(R)>0$. 
\end{proof}

The converse is not true, even for total computable random-preserving $f$.

\begin{theorem}
There is a random-preserving partial computable left-oneway $f$ which is not right-oneway. 
\end{theorem}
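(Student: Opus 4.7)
The plan is to graft a null-domain, positive-range ``easy'' branch onto a known partial computable random-preserving left-oneway function $h$, so that random $x \in \dom(f)$ almost never visits the easy branch (preserving left-oneway) while positive-measure $y$ can be right-inverted through the easy branch (breaking right-oneway). For the hard building block take $h : \twome \to \twome$, the total computable random-preserving right-oneway function of \cite{barmpalias2024computable} (restricted via Lemma \ref{randPresDomRange} if needed to guarantee random-preservation); by Theorem \ref{onewayEquiv} this $h$ is automatically left-oneway. For the easy branch let $P := \sqbrad{x \in \twome}{\forall n,\ x(2n) = 0}$, a null $\Pi^0_1$ class, and $\psi : P \to \twome$ the computable bijection $\psi(x)(n) := x(2n+1)$, whose inverse $\psi^{-1} : \twome \to P$ is also computable.

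Writing $bu$ for the concatenation of a bit $b$ with a real $u$, define $\dom(f) := \sqbrad{1y}{y \in \twome} \cup \sqbrad{0x}{x \in P}$ and set $f(1y) := 1\,h(y)$ and $f(0x) := 0\,\psi(x)$. The two branches sit in disjoint clopens, so $f$ is partial computable with $\Pi^0_2$ domain by Proposition \ref{domPi02}; crucially, their images lie in $\sqbrad{1\alpha}{\alpha \in \twome}$ and $\sqbrad{0\alpha}{\alpha \in \twome}$ respectively, so they are separated by the first output bit.

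The three properties are then verified as follows. Randomness-preservation: the easy branch has measure zero, so every random $u \in \dom(f)$ is of the form $u = 1y$ with $y$ random, and $f(u) = 1\,h(y)$ is random by random-preservation of $h$. Left-oneway: if a partial computable $g$ satisfies $\mu(\sqbrad{u\oplus r}{f(g(f(u)\oplus r)) = f(u)}) > 0$, then by the null easy branch we may restrict to $u = 1y$, and by first-bit image-disjointness any preimage of $1\,h(y)$ under $f$ must be of the form $1y'$ with $h(y') = h(y)$; hence the partial computable function $g'(w\oplus r) := y'$ when $g(1w\oplus r) = 1y'$ (and undefined otherwise) left-inverts $h$ on a positive-measure set of $(y,r)$, contradicting the left-oneway of $h$. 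Failure of right-oneway: define $g(y\oplus r) := 0\,\psi^{-1}(y^*)$ when $y = 0y^*$ and undefined otherwise; then $g$ is partial computable and $f(g(y\oplus r)) = 0\,\psi(\psi^{-1}(y^*)) = y$ for all $y$ with $y(0) = 0$, a set of measure $1/2$.

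The main obstacle is conceptual rather than technical: seeing that the computable bijection $\psi$ from the null class $P$ onto $\twome$ can be used to carry a null domain to a positive range, and that separating the two branches by an initial bit makes the interplay clean. With this setup in place the left-oneway reduction is forced (no preimage of a hard-side output can lie on the easy side) and the right-inverter can simply ignore the hard side. The only external dependency is the existence of $h$ with the right combination of partial computability, random-preservation, and left-oneway, which is handled by combining \cite{barmpalias2024computable}, Lemma \ref{randPresDomRange}, and Theorem \ref{onewayEquiv}.
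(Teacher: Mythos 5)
Your construction is essentially the paper's own proof: the paper likewise splits on the first bit, sending $0z\mapsto 0f(z)$ for a known random-preserving left-oneway $f$ and $1(0^\omega\oplus z)\mapsto 1z$ on the null easy branch, which is exactly your $\psi$-branch with the roles of the bits $0$ and $1$ swapped. The verification (randoms avoid the null branch, first-bit separation forces any probabilistic left-inverter of the composite to yield one for the building block, and the easy branch gives a computable right-inverter on a set of measure $1/2$) matches the paper's argument, so the proposal is correct and takes the same route.
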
\begin{proof}
Let $f$ be a random-preserving partial computable oneway function, 
which exists by \cite[Theorem 4.4]{barmpalias2024computable}. Let $\bar{f}$ be defined by 
\[
\bar{f}(x)=\begin{cases}
    0f(z) & \text{ if } x=0z \text{ for some }z \\
    1z & \text{ if } x=1(0^\omega\oplus z) \text{ for some }z \\
   \  \uparrow & \text{ otherwise}
\end{cases}
\] 
then $\bar{f}$ is random-preserving since $f$ is. The partial computable function \[1z\mapsto 1(0^\omega\oplus z)\] inverts $\bar{f}$ on all $y\in\llbracket 1\rrbracket$, so $\bar{f}$ is not one-way or right-oneway. 

One the other hand, any partial computable $\bar{g}$ such that \[\mu(\{x:\bar{f}(\bar{g}(\bar{f}(x)\oplus r)=\bar{f}(x)\})>0\] induces a partial computable $g$ such that \[\mu(\{x:f(g(f(x)\oplus r))=f(x)\})>0\] which is impossible since $f$ is left-oneway. Therefore $\bar{f}$ is left-oneway.
\end{proof}

We show that Theorem \ref{onewayEquiv} fails if we do not require randomness-preservation. 
We say that a real $x$ is \textit{incomplete} if $\emptyset'\not\leq_T x$.

\begin{theorem}\label{vlsrMOASDY}
There is a total computable  $f:\subseteq \{0,1\}^\omega\to \{0,1\}^\omega$ such that:

\begin{itemize}
\item  $f$ is a surjection with $f(\mathsf{ML})\cap\mathsf{ML}=\emptyset$
\item $f$ is right-oneway but not left-oneway.
\end{itemize}
\end{theorem}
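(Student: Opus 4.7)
My plan is to define $f$ piecewise along the first bit of the input, reserving one branch for an easy-to-decode random-destroying encoding (which will witness the failure of left-onewayness) and the other branch for a construction that inherits hardness from \cite[Theorem 4.5]{barmpalias2024computable} (which will guarantee right-onewayness). Fix a total computable random-preserving oneway surjection $h$ from \cite[Theorem 4.5]{barmpalias2024computable}; no partial computable function probabilistically inverts $h$ with positive measure. With $\text{pad}(x')(2n) := x'(n)$, $\text{pad}(x')(2n+1) := 0$, and $\phi(z)(n) := z(2n) \vee z(2n+1)$, set
\[
f(0 x') := 0 \cdot \text{pad}(x'), \qquad f(1 z) := h(\phi(z)).
\]

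The basic properties will follow from direct checks. The map $f$ is total computable, and it is surjective already through the second branch: for any $y$, $\text{pad}(y)$ is a $\phi$-preimage of $y$ and, by surjectivity of $h$, has an $h$-preimage, placing $y$ in the $f$-image of $\llbracket 1 \rrbracket$. For $f(\ML) \cap \ML = \emptyset$, Branch 1 places its outputs in the $\pz$ null set $N_0 := \{y : y(2n) = 0 \text{ for all } n\}$, and Branch 2 sends each random $z$ to a sequence whose bits are asymptotically Bernoulli$(3/4)$ (since $\phi$ produces Bernoulli$(3/4)$ bits on $\mu$-randoms, and $h$ samples bits at the positions $p^\emptyset(n)$, a computable injection which preserves independence and bias); both kinds of outputs fail the effective law of large numbers and are hence non-random.

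To show that $f$ is not left-oneway I would take the partial computable $g(y \oplus r) := 0 \cdot (y(1) y(3) y(5) \cdots)$, extracting the odd-indexed bits of $y$ and prepending $0$. For every $x = 0 x' \in \llbracket 0 \rrbracket$, $f(x) = 0 \cdot \text{pad}(x')$ has its $(2n{+}1)$-th bit equal to $x'(n)$, so $g(f(x) \oplus r) = 0 x' = x$ exactly, giving $f(g(f(x) \oplus r)) = f(x)$; the success set has $\mu$-measure at least $\tfrac{1}{2}$.

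For right-onewayness, suppose that some partial computable $g$ achieves $\mu(\{y\oplus r : f(g(y\oplus r)) = y\}) > 0$. Since $N_0$ is null, this positive measure concentrates on $(y, r)$ with $y \notin N_0$; for such $y$ the witness $g(y \oplus r)$ cannot lie in $\llbracket 0 \rrbracket$ (whose $f$-image sits inside $N_0$), so $g(y \oplus r) = 1 \cdot z_{y,r}$ with $h(\phi(z_{y,r})) = y$. Then $g'(y \oplus r) := \phi(\text{tail}(g(y \oplus r)))$ is partial computable and satisfies $h(g'(y \oplus r)) = y$ on the same positive-measure set, contradicting the probabilistic onewayness of $h$. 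The main obstacle was combining, in a single total computable $f$, random-destroying behaviour on all of $\ML$ with hardness of inversion inherited from $h$; the resolution is to split the easy-decoding and hard-inversion roles across the two bit-halves of the domain so that a uniformly random $y$ almost always forces the hard branch, while $f(x)$ for a positive-measure set of $x$ lives in the easy branch.
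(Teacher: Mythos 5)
Your construction is correct, but it goes by a genuinely different route than the paper. The paper's proof is self-contained modulo one classical fact: it takes the null $\Pi^0_1$ class $P$ of $\mathsf{PA}$ reals, sets $f(x\oplus z)=z$ if $x\in P$ and $f(x\oplus z)=z\upharpoonright_n0^\omega$ where $n$ is the first stage at which $x$ leaves $P$; right-onewayness follows from Stephan's theorem that incomplete randoms do not compute $\mathsf{PA}$ reals (every preimage of a random output computes a $\mathsf{PA}$ real, while almost every $y\oplus r$ is an incomplete random), and failure of left-onewayness follows because the image of the complement of $P\times\twome$ is countable, so some single value $y$ has $\mu(f^{-1}(y))>0$ and the trivial map $g(w\oplus r)=r$ succeeds with positive probability. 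You instead split the domain into an exactly invertible padding branch (which kills left-onewayness with success measured on the domain side, measure $1/2$) and a hard branch $z\mapsto h(\phi(z))$ that inherits right-onewayness from the oneway surjection $h$ of \cite[Theorem 4.5]{barmpalias2024computable}, with randomness destroyed by the OR-map $\phi$; your reduction from a putative probabilistic inversion of $f$ to one of $h$ is sound, as is the left-oneway failure. One caveat: your argument that $h(\phi(z))$ is non-random for every random $z$ does not follow from the cited properties of $h$ alone (a random-preserving surjection can in principle map non-random inputs to random outputs); it needs the explicit bit-sampling form $h(x)(k)=x(p(k))$ with $p$ a computable injection, which the present paper does display, and then an effective strong law of large numbers for the disjoint pairs $(2p(k),2p(k)+1)$ showing the output has $1$-frequency tending to $3/4$. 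You should state this structural property and the effective LLN step explicitly; with that, your proof stands, at the cost of depending on the cited construction where the paper's argument needs only Stephan's theorem.
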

\begin{proof}
Let $P=\bigcap_s P_s$ be the $\Pi^0_1$ class of $\mathsf{PA}$ reals where $P_s$ is clopen and $s\mapsto P_s$ is computable.
Define the total computable surjection $f:\twome\to\twome$ by
\[
f(x\oplus z)=\begin{cases}
\ \ z&\textrm{if $x\in P$}\\
z\restr_n  \ 0^{\omega}&\textrm{if $n=\min\sqbrad{s}{x\not\in P_s}$.}
\end{cases}
\]
If $x\oplus z$ is random, $x\not\in P$ so $f(x\oplus z)$ is computable.  So $f(\mathsf{ML})\cap\mathsf{ML}=\emptyset$. 

Almost all reals $y\in f(\twome)$ are incomplete randoms. For each random $y$ every 
$w\in f\inv(y)$ computes a $\PA$ real. 
So for almost all $r$ and $y\in  f(\twome)$:
\begin{itemize}
\item $y\oplus r$ is an incomplete random
\item each real in $f\inv(y)$ computes a $\PA$ real.
\end{itemize}
By \cite{MR2258713frank} incomplete randoms do not compute $\PA$ reals so
\[
\mu(\sqbrad{r}{\textrm{$r\oplus y$ computes a member of $f\inv(y)$}})=0
\]
for almost all $y\in  f(\twome)$. This shows  that $f$ is right-oneway. Since
\[
x\not\in P\implies f(x\oplus z)\ \text{ends with}\ 0^{\omega}
\] 
the set $f((\{0,1\}^\omega-P)\times \twome)$ is countable. 
Since $\mu(P)=0$ by countable additivity there is $y$ with $\mu(f^{-1}(y))>0$. 
For this $y$ we have
\[
\mu(\{v\oplus r:f(v)=f(r)=y\})=\mu(f^{-1}(y))^2>0 
\] 
so for the function $g(w,r):=r$ we get  
\[
\mu\left(\{v\oplus r:f(g(f(v)\oplus r))=f(v)\}\right)>0.
\] 
Since $g$ is computable this shows that $f$ is not left-oneway.
\end{proof}

\section{A left-oneway injective function}\label{injective}
Virtually all examples of partial computable functions that are hard to invert we have seen 
are non-injective, with the exception of Proposition \ref{k6y6bnVCO8}. 
Random oracles do not reduce the complexity of inverting injective maps.
\begin{lemma}
Let $f:\subseteq\twome\to\twome$ be a partial computable 
injective function with positive domain. If $x\not\leqT f(x)$ for almost all $x\in \dom(f)$ then $f$ is left-oneway. 
\end{lemma}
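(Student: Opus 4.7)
The approach is to argue by contradiction using Fubini together with the relativized measure-theoretic form of Sacks's theorem: if $\mu(\{r:y\leq_T z\oplus r\})>0$ then $y\leq_T z$. Since $\mu(\dom(f))>0$ is given as part of the hypothesis, it suffices to establish the measure-zero condition on fibered preimages in the definition of left-oneway.

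Suppose for contradiction that there is a partial computable $g$ with $\mu(A)>0$, where
\[
A := \{x\oplus r : f(g(f(x)\oplus r))=f(x)\}.
\]
First I would exploit injectivity of $f$: whenever $x\oplus r\in A$, the equality $f(g(f(x)\oplus r))=f(x)$ combined with injectivity forces $g(f(x)\oplus r)=x$, so $x\leq_T f(x)\oplus r$ via the Turing functional computing $g$, and in particular $A\subseteq\{x\oplus r:x\in\dom(f)\}$.

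Next, applying Fubini to the Borel set $A$, the set $X:=\{x:\mu(\{r:x\oplus r\in A\})>0\}$ has positive measure and is contained in $\dom(f)$. For each $x\in X$ the fiber $A_x$ is a positive-measure subset of $\{r:x\leq_T f(x)\oplus r\}$, so the relativized form of Sacks's theorem applied with $y=x$ and $z=f(x)$ yields $x\leq_T f(x)$. Hence $X$ is a positive-measure subset of $\dom(f)$ on which $x\leq_T f(x)$, contradicting the hypothesis that $x\not\leq_T f(x)$ for almost every $x\in\dom(f)$.

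I expect the main obstacle to be invoking the correct uniform relativization of Sacks's theorem --- the reduction on each fiber $A_x$ is via a fixed functional ($g$), which is exactly what makes the theorem applicable, but spelling this out cleanly requires care. If a direct quotation is inconvenient, a backup route is to extract a random pair $x\oplus r\in A$ (which exists since $A$ is Borel of positive measure), apply van Lambalgen's theorem to deduce that $x$ is random and $r$ is random relative to $x$, hence relative to $f(x)$, and then convert the positive-measure family of reductions $x=g(f(x)\oplus r)$ into $x\leq_T f(x)$ by a standard K\v{u}cera-style absorption of the random oracle $r$.
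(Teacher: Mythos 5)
Your proposal is correct and follows essentially the same route as the paper: use injectivity to turn $f(g(f(x)\oplus r))=f(x)$ into $g(f(x)\oplus r)=x$, apply Fubini to extract positive-measure fibers, and then use the relativized measure-theoretic Sacks theorem to conclude $x\leq_T f(x)$, contradicting the almost-everywhere hypothesis. The only cosmetic difference is that the paper picks a single witness $x$ with $x\not\leq_T f(x)$ and a positive fiber before invoking Sacks, whereas you apply Sacks across the whole positive-measure set $X$; the substance is identical.
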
\begin{proof}
Since $f$ is injective we have
\[
\{x\oplus r: f(g(f(x)\oplus r))=f(x)\}=\{x\oplus r:g(f(x)\oplus r)=x\}.
\]
For a contradiction assume that $f$ is not left-oneway so 
\[
\mu(\{x\oplus r:g(f(x)\oplus r)=x\})>0 
\]
for some partial computable $g$. Then there is $x\in \dom(f)$ with $x\not\leqT f(x)$ and
\[
\mu(\{r : g(f(x)\oplus r)=x\})>0 
\]
so $\mu(\{r : x\leqT f(x)\oplus r\})>0$ which contradicts $x\not\leqT f(x)$.
\end{proof}
This raises a question: are  all effective injective functions  easy to invert?

\begin{theorem}\label{onewayInjective}
There is a left-oneway partial computable injective function.
\end{theorem}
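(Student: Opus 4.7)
By the lemma immediately preceding the theorem, it suffices to construct an injective partial computable function $f:\subseteq\twome\to\twome$ with $\mu(\dom(f))>0$ such that $x\not\leqT f(x)$ for $\mu$-almost every $x\in\dom(f)$. Given such an $f$, the lemma delivers left-onewayness automatically.

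The plan is to build $f$ stage-by-stage via a monotone computable representation $\hat f:\twolo\to\twolo$ supported on a computable tree $T\subseteq\twolo$, so that $\dom(f)=[T]$ and $f(x)=\lim_n \hat f(x\restr_n)$. Starting from $T_0=\twolo$ with $\hat f_0$ acting identically on strings, I would proceed in stages to meet a countable list of requirements $R_e$ indexed by Turing functionals $\Phi_e$, where $R_e$ asks that $A_e:=\{x\in[T]:\Phi_e^{f(x)}=x\}$ be $\mu$-null. At stage $s$, with tree $T_s$ and representation $\hat f_s$ in hand, I enumerate for each $e\le s$ the c.e.\ collection of strings $\sigma\in T_s$ for which $\Phi_e^{\hat f_s(\sigma)}$ has agreed with $\sigma$ on a long enough prefix, and prune $T_s$ inside the cylinders $\dbra{\sigma}$ of such strings, subject to a total measure budget of at most $2^{-e-2}$ on behalf of $R_e$. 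Throughout, injectivity of $\hat f$ on antichains of $T$ is preserved by appending short distinguishing suffixes whenever two incomparable strings of $T$ would otherwise receive comparable images; this suffix bookkeeping removes no measure, so the budget per requirement is untouched. In the limit the tree $T=\bigcap_s T_s$ has $\mu([T])\ge 1-\sum_e 2^{-e-2}>0$, the function $f$ is partial computable and injective on $[T]$, and every $A_e$ is $\mu$-null, giving $x\not\leqT f(x)$ for $\mu$-almost every $x\in[T]$.

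The main obstacle is reconciling injectivity with the diagonalization: since $f$ is injective, $f(x)$ set-theoretically determines $x$, and the failure $x\not\leqT f(x)$ must be encoded purely in the \emph{non-effectiveness} of that determination. The budgeted pruning is the device that realizes this: after $R_e$ is handled, the branches on which $\Phi_e^{\hat f(\sigma)}$ was tracking $\sigma$ have been removed up to a measure of $2^{-e-2}$, so on almost every surviving path $x\in[T]$ the functional $\Phi_e^{f(x)}$ must eventually disagree with $x$; meanwhile the distinguishing suffixes that maintain injectivity of $\hat f$ are spread over a sparse, $\pzt$-level substructure of $T$ that no single functional $\Phi_e$ can coordinate with from finite prefixes of $f(x)$. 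Applying the preceding lemma to the constructed $f$ finishes the proof.
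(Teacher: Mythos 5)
Your opening reduction is fine and is in fact exactly how the paper frames the construction: by the lemma preceding the theorem it suffices to build an injective partial computable $f$ with positive domain meeting, for every $e$, the requirement $R_e:\ \mu(\{x\in\dom(f):\Phi_e(f(x))=x\})=0$. The gap is in how you propose to meet $R_e$. Pruning the cylinders on which $\Phi_e^{\hat f_s(\sigma)}$ tracks $\sigma$, \emph{subject to a measure budget of $2^{-e-2}$}, cannot make the tracking set null, because the measure of $\{x:\Phi_e(f(x))=x\}$ is controlled by the adversary, not by you. Concretely, with your start $\hat f_0=\mathrm{id}$ and $\Phi_e$ the identity functional, $\Phi_e^{\hat f_s(\sigma)}$ tracks $\sigma$ on the whole tree; removing measure at most $2^{-e-2}$ leaves a set of measure $\geq 1-2^{-e-2}$ on which nothing in your construction prevents $\Phi_e(f(x))=x$ — your ``distinguishing suffixes'' are computable, measure-free bookkeeping, so a functional can simply simulate the construction and recover $x$ from $f(x)$ on the surviving part. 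Deleting a budgeted portion of a set of unknown (possibly full) measure does not make its remainder null, and abandoning the budget would destroy positivity of the domain. The closing claim that the suffix structure is ``sparse'' and ``no single functional can coordinate with it'' is not an argument; it is precisely the point that has to be proved, and as stated it is false for functionals that simulate the (computable) construction.

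What is missing is an active diagonalization on the \emph{image} side, which is the heart of the paper's proof. There, when a marker on $\sigma$ finds some $y\in f_s(\llbracket\sigma\rrbracket)$ with $\Phi_e(y)(|\sigma|)\downarrow=i$ and use $\rho\prec y$, the construction redefines $\hat f$ so that all $x\in\llbracket\sigma(1-i)0^{l}\rrbracket$ are mapped above $\rho$; then $\Phi_e(f(x))(|\sigma|)=i\neq x(|\sigma|)$, so $R_e$ is met on that cylinder \emph{without removing any measure}, and the process is iterated on $\llbracket\sigma i0^{l}\rrbracket$. Measure (at most $2^{-e-k}$ per marker, over a prefix-free set of markers that never act) is sacrificed only on the small ``waiting'' branches reserved for the case that no such computation ever converges — and in that case $R_e$ holds anyway because $\Phi_e$ diverges on the relevant images or $f$ is undefined there. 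The genuinely delicate point, which your sketch does not touch, is keeping $f$ injective while redirecting the images of large cylinders into $\llbracket\rho\rrbracket$: the paper maintains the invariants that active leaves have pairwise incomparable images and that $f_{s+1}(\llbracket\sigma\rrbracket)\subseteq f_s(\llbracket\sigma\rrbracket)$ below marked nodes, via the suffixes $00,01,1$ and ``natural extensions''. Without a mechanism of this kind, your construction does not establish the theorem.
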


The proof is an effective measure-theoretic argument in a well-established framework 
for the construction of effective probabilistic maps \cite{typical}. It is of independent interest since 
similar results  \cite{Martin60, BIENVENU201764, paris77} do not give injective maps. 

\subsection{Outline of the proof}\label{5nKaXzECB2}
Our argument is self-contained but follows the
standard framework for constructing probabilistic maps, see  
\cite[\S 8.21]{rodenisbook} or \cite{typical, BIENVENU201764}. 

Let $(\Phi_e)$ be an effective enumeration of all Turing functionals.
We construct a partial computable injectiion $f$ with positive domain such that for all $e$:
\[
R_e\text{:}\ \  \mu(\{x:\Phi_e(f(x))\downarrow=x\})=0.
\] 
We construct a representation $\hat{f}$ of $f$  in stages. At stage $s$ the domain of the current approximation $\hat{f}_s$
of $\hat f$ is a finite tree where each node with a successor has two successors. 
The {\em leaf nodes of  $\hat f_s$} are the leaf nodes of its domain.

{\bf Satisfying $R_e$.}
Suppose that  $\hat{f}(\sigma)$ is defined and all $\hat{f}(\sigma\tau), \tau\neq\lambda$ are undefined. To
 satisfy $R_e$ in $\llbracket\sigma\rrbracket$ we need to 
 define $f(x)$ so that for almost all $x\in\llbracket\sigma\rrbracket$ 
 \[
 \Phi_e(f(x))\uparrow\ \ \vee\ \ \ \Phi_e(f(x))\neq x.
 \]

{\em Case 1.} If $\Phi_e(y)\uparrow$ for all $y\in f(\llbracket\sigma\rrbracket)$ then $R_e$ is automatically satisfied.

{\em Case 2.} If $\Phi_e(y)(|\sigma|)\downarrow=i$  for  some $y\in f(\llbracket\sigma\rrbracket)$ with  oracle-use $\rho\prec y$ then
\begin{equation}\label{iwJ2Qp9e8T}
\parb{\rho\prec f(x)\wedga x(|\sigma|)\neq i}\impl
\Phi_e(f(x))(|\sigma|)=\Phi_e(y)(|\sigma|)\neq x(|\sigma|)
\end{equation}
which means that  $R_e$ is satisfied for this $x$. So we
\begin{enumerate}[(a)]
\item  satisfy $R_e$  in $\llbracket\sigma(1-i)\rrbracket$ by defining $\hat{f}(\sigma(1-i))=\rho$
\item define $\hat{f}(\sigma i)$ arbitrarily and
try to satisfy $R_e$ with $\sigma i$ in place of $\sigma$. 
\end{enumerate}
By iterating  we satisfy  $R_e$ for almost all $x\in\llbracket\sigma\rrbracket$. 

{\bf Non-uniformity.} Unfortunately 
we cannot effectively distinguish which case is true. 
We deal with this  by
creating a branch $\sigma \tau$ above $\sigma$ for each $\tau$ of certain length $l+1$ 
and assign to them different guesses. 
Branches $\sigma 00^l$, $\sigma 10^l$:
\begin{itemize}
\item believe  case 2 and search for $\rho$ as in \eqref{iwJ2Qp9e8T}
\item if $\rho$ is found they define $\hat{f}(\sigma (1-i)0^l)\succ \rho$ satisfying $R_e$  in $\llbracket\sigma(1-i)0^l\rrbracket$.
\end{itemize}
The $\hat{f}$-value at $\sigma i0^l$ is defined arbitrarily 
and $R_e$ in $\llbracket\sigma i0^l\rrbracket$ is considered by the nodes above.
If the guess is wrong:
\begin{itemize}
\item  the search never stops and 
$f(x)\uparrow$ for $x\in\llbracket\sigma 00^l\rrbracket\cup \llbracket\sigma 10^l\rrbracket$
\item  $R_e$ is satisfied since $f(x)\uparrow$
and $f$ is undefined in 
$\llbracket\sigma 00^l\rrbracket\cup \llbracket\sigma 10^l\rrbracket$.
\end{itemize}
In the meantime the remaining branches above $\sigma$  believe  {\em Case 1} and  proceed to other requirements. 
If $\sigma 00^l$, $\sigma 10^l$ have the wrong guess there will be a stage
where this  belief is seen to be wrong. At this point, although
$\hat{f}$ will have been defined above the remaining branches out of $\sigma$, 
we will extend the latter appropriately and reconsider 
$R_e$ on these newly added leaves.

{\bf Markers.} The assignment of requirement $R_e$ to a $\llbracket\sigma\rrbracket$ is done by placing 
an \textit{$e$-marker} on node $\sigma$ as elaborated above. 
The marker \textit{acts} if the $\rho$ in \eqref{iwJ2Qp9e8T} is found.
At each stage we keep a record of the status of each requirement on each leaf node:
initially $\lambda$ is \textit{$e$-unattended} for each $e$ and
\begin{itemize}
\item when $\sigma$ receives an $e$-marker, $\sigma 00^l$, $\sigma 10^l$ become \textit{$e$-waiting} and the other branches are  \textit{temporarily $e$-satisfied}
\item when this $e$-marker acts, the $e$-waiting branch $\sigma (1-i)0^l$ becomes \textit{$e$-satisfied} and the other $e$-waiting branch along with the temporarily $e$-satisfied nodes extending $\sigma$ become $e$-unattended
\item in all other cases the status of the leaf node is the same as its parent. 
\end{itemize}

A node is \textit{inactive} if it is $e$-waiting for some  $e$, indicating that no definition of $\hat f$ is permitted above them
under this status. We ensure that
\[
f(x)\uparrow\iff
\textrm{$x\upharpoonright_n$ stays inactive forever for some $n$.}
\]

{\bf Positive domain.}
As we elaborated,  if an $e$-marker on $\sigma$ never acts:
\begin{itemize}
\item  $f$ becomes undefined on measure $\leq 2^{-l}$ relative to $\llbracket\sigma\rrbracket$ 
\item no other $e$-markers are placed above $\sigma$ until the $e$-marker on $\sigma$ acts
\end{itemize}
so the set of strings with $e$-marker that never acts is prefix-free. Thus each  $R_e$ can only cause $f$ undefined on  measure $\leq 2^{-l}$. By letting $l=e+k$ for $k:=2$: 
 \[
 \mu(\dom(f))\geq 1-\sum_{e}2^{-e-k}=1-2^{1-k}>0.
 \]

{\bf Injective.}
We make $f$ injective by ensuring that at any stage $s$:
 \begin{enumerate}[(i)]
\item for any active leaf nodes $\sigma\ |\ \tau$ we have $\hat{f}_s(\sigma)\ |\ \hat{f}_s(\tau)$,
\item if there is an $e$-marker on $\sigma$ then 
$f_{s+1}(\llbracket\sigma\rrbracket)\subseteq f_s(\llbracket\sigma\rrbracket)$ where 
\[
f_s(\llbracket\sigma\rrbracket):=
\bigcup\left\{\llbracket\hat{f}_s(\sigma \tau)\rrbracket:\sigma 
\tau\text{ is an active leaf node of }\hat{f}_s\right\}
\] 
so $f(\llbracket\sigma\rrbracket)\subseteq f_s(\llbracket\sigma\rrbracket)$. 
\end{enumerate}

To this end we  often \textit{extend $\hat{f}_s$ to $\rho$ naturally}, 
which means that we find the unique leaf node $\sigma\prec\rho$ in $\hat{f}_s$
 and let $\hat{f}_{s+1}(\sigma \tau)=\hat{f}_s(\sigma) \tau$ for all $\tau$ such that $\sigma\tau\preceq\rho$. 
 Note that (i), (ii) are preserved under natural extensions. 

\subsection{Construction and verification}
We divide the stages for the construction to 3 types: 
\begin{itemize}
\item \textit{initialization stages}  $3\langle e,t\rangle$ 
where we put new $e$-markers on $e$-unattended nodes and do the initialization from them
\item \textit{activation stages} $3\langle e,t\rangle+1$  where  $e$-markers  requiring action act
\item \textit{expansion stages} $3t+2$ where we expand $f$ at its active leaf nodes. 
\end{itemize}

{\bf Construction.}
Let $k=2$, $\hat{f}_0(\lambda)=\lambda$ and $\lambda$ be $e$-unattended for each $e$. 

{\em Stage $s=3\langle e,t\rangle$:} for each active $e$-unattended 
leaf node $\sigma$:
\begin{itemize}
\item put an $e$-marker on $\sigma$
\item for $\tau=\sigma 00^{e+k}$ and $\tau=\sigma 10^{e+k}$ set $\hat{f}_{s+1}(\sigma \tau)=\hat{f}_s(\sigma)$, 
define $\hat{f}_{s+1}$ on strings between $\sigma$ and $\sigma\tau$, and set $\sigma\tau$ $e$-waiting;
\item for all other $\tau$ with $|\tau|=e+k+1$ naturally extend $\hat{f}_s$ to $\sigma\tau$ and set $\sigma\tau$ temporarily $e$-satisfied. 
\end{itemize}

{\em Stage $s=3\langle e,t\rangle+1$:} for each $e$-marker on $\sigma$ with $|\sigma|=n$ that has not acted, find $\tau$ with $\hat{f}_s(\sigma\tau)\downarrow=\hat{f}_s(\sigma)\rho$ and $\Phi_e(\rho)(n)[s]\downarrow=i$. If  $\tau$ does not exist  proceed to the next stage. Otherwise  we may assume that $\sigma\tau$ is a leaf node of $\hat{f}_s$, since otherwise we can replace $\tau$ with an  extension of it on leaf nodes. We then 
set: 
\begin{itemize}
\item  $j=1-i$,  $\hat{f}_{s+1}(\sigma j0^{e+k})=\hat{f}_s(\sigma)\rho 00$ and  $\sigma j0^{e+k}$ be $e$-satisfied
\item  $\sigma i0^{e+k}$ be $e$-unattended, $f_{s+1}(\sigma\tau)=f_s(\sigma)\rho 1$, $\hat{f}_{s+1}(\sigma i0^{e+k})=\hat{f}_s(\sigma)\rho 01$ 
\item all temporarily $e$-satisfied leaf nodes of $\hat{f}_s$ extending $\sigma$ be $e$-unattended
\end{itemize}
and  declare that this $e$-marker has acted.\footnote{suffixing $\hat{f}_s(\sigma)\rho$ with $00,1,01$
serves satisfying (i), (ii) above.}

{\em Stage $s=3\langle e,t\rangle+2$:} for each active leaf node $\sigma$ naturally extend $\hat{f}_s$ to $\sigma 0,\sigma 1$. 
This ensures $f(x)\downarrow$ when 
$x$ does not have prefixes that stay inactive forever. 

\paragraph{Verification.} As we discussed in \S\ref{5nKaXzECB2} the domain of $f$ is positive.

By induction on $s$ we get (i), (ii) from \S\ref{5nKaXzECB2}.

\begin{lemma}
The function $f$ is injective.
\end{lemma}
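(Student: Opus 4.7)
The plan is to invoke invariant (i) established in \S\ref{5nKaXzECB2}: at every stage $s$, any two incomparable active leaf nodes $\sigma,\tau$ of $\hat{f}_s$ satisfy $\hat{f}_s(\sigma)\mid \hat{f}_s(\tau)$. Combined with the representation property $\hat{f}_s(\sigma)\prec f(x)$ whenever $\sigma\prec x$ and $x\in\dom(f)$, this will force distinct inputs in $\dom(f)$ to be sent to distinct outputs.

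Concretely, take distinct $x,y\in\dom(f)$ and let $n$ be the least position with $x(n)\neq y(n)$. I will locate a single stage $s$ carrying active leaf nodes $\sigma_x,\sigma_y$ of $\hat{f}_s$ with $x\upharpoonright_{n+1}\preceq\sigma_x\prec x$ and $y\upharpoonright_{n+1}\preceq\sigma_y\prec y$. Given such an $s$, the nodes $\sigma_x,\sigma_y$ extend the incomparable prefixes $x\upharpoonright_{n+1}$ and $y\upharpoonright_{n+1}$, so they are themselves incomparable; invariant~(i) then yields $\hat{f}_s(\sigma_x)\mid\hat{f}_s(\sigma_y)$. Because these two strings are prefixes of $f(x)$ and $f(y)$ respectively, it follows that $f(x)\neq f(y)$.

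The main obstacle is exhibiting this common stage. Since $x,y\in\dom(f)$, every prefix of $x$ or of $y$ is eventually defined in $\hat{f}$, so the length of the leaf of $\hat{f}_s$ lying on the path of $x$ (and similarly of $y$) tends to infinity with $s$. Moreover, whenever that leaf is $e$-waiting, the $e$-marker on its ancestor must act at some later stage --- otherwise, by the positive-domain discussion in \S\ref{5nKaXzECB2}, $f$ would be undefined on a cone around $x$, contradicting $x\in\dom(f)$. Hence the stretches during which the leaf on $x$'s path is inactive are of finite duration, and the same holds for $y$; taking $s$ large enough that both leaves have length at least $n+1$ and choosing it outside every outstanding waiting interval on both sides produces the required common stage.
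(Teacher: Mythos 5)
There is a genuine gap at the final step. Your argument reduces injectivity to finding a \emph{single} stage $s$ at which the leaves on the paths of $x$ and of $y$ are \emph{both} active, and you justify its existence by noting that each inactive (waiting) stretch on either path is finite, "choosing $s$ outside every outstanding waiting interval on both sides". That inference is a non sequitur: the waiting intervals on the two paths, though individually finite, can interleave so that at every sufficiently large stage at least one of the two leaves is inactive (deactivations happen at initialization stages, reactivations at activation stages, and nothing in the construction forces the active phases of the two paths to overlap). Nothing you wrote rules this scenario out, and it is exactly the case the paper treats separately: it first observes that if the two leaves are ever simultaneously active (and incomparable) then invariant (i) finishes the proof -- this is your argument -- and otherwise it picks a stage $s$ with $\chi_s$ active, $\gamma_s$ inactive, waits for a later $t$ at which $\gamma_t=\gamma_s$ has become active (so $\chi_t$ is inactive, under a marker on some $\sigma$ with $\chi_s\preceq\sigma\prec\chi_t$), and then uses invariant (ii), i.e.\ $f(\llbracket\sigma\rrbracket)\subseteq f_t(\llbracket\sigma\rrbracket)$, together with (i) at stage $t$ to get $f_t(\llbracket\sigma\rrbracket)\cap\llbracket\hat{f}_t(\gamma_t)\rrbracket=\emptyset$, hence $f(x)\neq f(y)$.

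So the missing ingredient is invariant (ii): in the non-overlapping case you cannot compare $\hat{f}$-values of the two leaves at a common active stage, and you must instead argue that once a marker sits on an ancestor $\sigma$ of $x$'s leaf, the set of possible $f$-values above $\sigma$ only shrinks, so $f(x)$ stays trapped in a region already known (by (i) at the stage where $y$'s leaf is active) to be disjoint from the cylinder containing $f(y)$. Without either establishing that a common active stage always exists (which would require a new argument about the scheduling of markers, not just finiteness of each waiting interval) or invoking (ii) as the paper does, the proof is incomplete.
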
\begin{proof}
Assuming  $x\neq y$, $f(x), f(y)\downarrow$ we show $f(x)\neq f(y)$. 

Let $\chi_s$ be the unique leaf node of $\hat{f}_s$ with $\chi_s\prec x$ and similarly for $\gamma_s\prec y$. 

Since $x\neq y$ for  sufficiently large $s$ we have $\chi_s\ |\ \gamma_s$. So
if $\chi_s$ and $\gamma_s$ are both active at $s$ by  (i) we get $\hat{f}_s(\chi_s)\ |\ \hat{f}_s(\gamma_s)$ so $f(x)\neq f(y)$. 
For this reason we may assume that at most one of $\chi_s$, $\gamma_s$ is active at sufficiently large $s$. 

Without loss of generality pick  $s$ such that $\chi_s$ is active and $\gamma_s$ is inactive. 

Since $f(y)\downarrow$ there exists $t>s$ such that 
\[
\textrm{$\chi_t\succ\chi_s$ is inactive and $\gamma_t=\gamma_s$ is active.}
\]
Let $\sigma$ be the node with an $e$-marker that made $\chi_t$ inactive so 
\begin{itemize}
\item  $\chi_s\preceq\sigma\prec\chi_t$ and  $\chi_s\ |\ \gamma_s$, $\tau\ |\ \gamma_t$ for any leaf node 
$\tau\succ\sigma$ 
\item by  (i) we have $\hat{f}_t(\tau)\ |\ \hat{f}_t(\gamma_t)$ so 
$f_t(\llbracket\sigma\rrbracket)\cap\llbracket\hat{f}_t(\gamma_t)\rrbracket=\emptyset$.
\end{itemize}
By (ii) we have  $f(x)\in f(\llbracket\sigma\rrbracket)\subset f_t(\llbracket\sigma\rrbracket)$ 
and $f(y)\in\llbracket\hat{f}_t(\gamma_t)\rrbracket$, so $f(x)\neq f(y)$. 
\end{proof}
It remains to show that if $g=\Phi_e$ for some $e$ then $g$ inverts $f$ on measure $0$.

For a contradiction assume otherwise so $\mu(L)>0$ where
\[
L=\{x:\Phi_e(f(x))=x\}.
\]
By Lebesgue's density theorem, there is $x_0\in L$ with density $1$ in $L$. For each $s$ let $\chi_s$ 
be the unique leaf node of $\hat{f}_s$ with $\chi_s\prec x_0$. Then:

\begin{itemize}
\item if $\chi_s$ is $e$-satisfied  the $e$-status of $\chi_t$ remains $e$-satisfied forever; 
\item if $\chi_s$ is $e$-waiting, since $f(x_0)\downarrow$ there must be $t>s$ where $\chi_t=\chi_s$ becomes $e$-satisfied or $e$-unattended; 
\item if $\chi_s$ is $e$-unattended  there must be $t>s$ where an $e$-marker is put on some $\chi_t$ and $\chi_{t+1}$ is $e$-waiting or temporarily $e$-satisfied; 
\item if $\chi_s$ is temporarily $e$-satisfied, the $e$-status of $\chi_t$ might remain temporarily $e$-satisfied forever, or  might become $e$-unattended for some $t>s$. 
\end{itemize}

Therefore there are three cases for  the status of $\chi_s$ as $s\to\infty$. 

{\em Case 1.} $\chi_s$ is $e$-satisfied for all sufficiently large $s$. Then $R_e$ is satisfied on all $x\in\chi_s$ so $R_e$ is satisfied for $x_0$, contradicting  $x_0\in L$. 

{\em Case 2.} $\chi_s$ is temporarily $e$-satisfied for all sufficiently large $s$. Then the $e$-marker on some $\sigma\prec\chi_s$ never acts so the corresponding $\tau,\rho$ are never found and $R_e$ is satisfied in $\llbracket\sigma\rrbracket$.
Since $x_0\in \llbracket\sigma\rrbracket$ this contradicts $x_0\in L$.

{\em Case 3:} the $e$-status of $\chi_s$ as $s\to\infty$ loops indefinitely through
\begin{enumerate}[\hspace{0.3cm} (a)]
\item $e$-unattended
\item $e$-waiting or temporarily $e$-satisfied
\item $e$-unattended
\end{enumerate}
Transition  (a)$\to$(b) is caused by an $e$-marker on some $\sigma$ which will  act when transition (b)$\to$(c) occurs.
Each such action produces a set of relative measure $2^{-e-k-1}$ where $R_e$ is satisfied, so the density of $x_0$ in $L$ is 
$\leq 1-2^{-e-k-1}$, contradicting the hypothesis that $x_0$ has density $1$ in $L$. 

We conclude that  $\mu(L)=0$ so all requirements are met.


\begin{thebibliography}{29}
\providecommand{\natexlab}[1]{#1}
\providecommand{\url}[1]{\texttt{#1}}
\expandafter\ifx\csname urlstyle\endcsname\relax
  \providecommand{\doi}[1]{doi: #1}\else
  \providecommand{\doi}{doi: \begingroup \urlstyle{rm}\Url}\fi

\bibitem[Avigad and Brattka(2014)]{AvigadBrattka2014}
J.~Avigad and V.~Brattka.
\newblock \emph{Computability and analysis: the legacy of Alan Turing}, pages
  1--47.
\newblock Lecture Notes in Logic. Cambridge University Press, 2014.

\bibitem[Barmpalias and Zhang(2024)]{barmpalias2024computable}
G.~Barmpalias and X.~Zhang.
\newblock Computable one-way functions on the reals.
\newblock Arxiv 2406.15817, 2024.

\bibitem[Barmpalias et~al.(2014)Barmpalias, Day, and Lewis-Pye]{typical}
G.~Barmpalias, A.~R. Day, and A.~E.~M. Lewis-Pye.
\newblock The typical {T}uring degree.
\newblock \emph{Proc. London Math. Soc.}, 109\penalty0 (1):\penalty0 1--39,
  2014.

\bibitem[Bienvenu and Patey(2017)]{BIENVENU201764}
L.~Bienvenu and L.~Patey.
\newblock Diagonally non-computable functions and fireworks.
\newblock \emph{Inf. Comput.}, 253:\penalty0 64--77, 2017.

\bibitem[Blum et~al.(1989)Blum, Shub, and Smale]{BSS89}
L.~Blum, M.~Shub, and S.~Smale.
\newblock {On a theory of computation and complexity over the real numbers:
  $NP$- completeness, recursive functions and universal machines}.
\newblock \emph{Bull. Am. Math. Soc.}, 21\penalty0 (1):\penalty0 1--46, 1989.

\bibitem[Brattka(2016)]{BrattkaHist2016}
V.~Brattka.
\newblock Computability and analysis, a historical approach.
\newblock In A.~Beckmann, L.~Bienvenu, and N.~Jonoska, editors, \emph{Pursuit
  of the Universal}, pages 45--57, Cham, 2016. Springer International
  Publishing.

\bibitem[Chong and Yu(2015)]{chong2015recursion}
C.~T. Chong and L.~Yu.
\newblock \emph{Recursion theory: Computational aspects of definability},
  volume~8.
\newblock Walter de Gruyter GmbH \& Co KG, 2015.

\bibitem[Downey and Hirschfeldt(2010)]{rodenisbook}
R.~G. Downey and D.~Hirschfeldt.
\newblock \emph{Algorithmic Randomness and Complexity}.
\newblock Springer, 2010.

\bibitem[Friedberg(1958)]{Freid_banach}
R.~M. Friedberg.
\newblock {4-quantifier completeness: A Banach-Mazur functional not uniformly
  partial recursive}.
\newblock \emph{Bull. Acad. Pol. Sci., Sér. Sci. Math. Astron. Phys.},
  6:\penalty0 1–5, 1958.

\bibitem[Fu and Zucker(2015)]{ZuckerPartial15}
M.~Q. Fu and J.~Zucker.
\newblock Models of computation for partial functions on the reals.
\newblock \emph{J. Log. Algebraic Methods Program.}, 84\penalty0 (2):\penalty0
  218--237, 2015.
\newblock ISSN 2352-2208.

\bibitem[G{\'a}cs(May 8, 2024)]{Gacs24oneway}
P.~G{\'a}cs.
\newblock A (partially) computable map over infinite sequences can be
  `one-way'.
\newblock Circulated draft, May 8, 2024.

\bibitem[Grzegorczyk(1955)]{Grzegorczyk1955}
A.~Grzegorczyk.
\newblock Computable functionals.
\newblock \emph{Fund. Math.}, 42\penalty0 (1):\penalty0 168--202, 1955.

\bibitem[Hoyrup(2014)]{Hoyrup14Irreversible}
M.~Hoyrup.
\newblock Irreversible computable functions.
\newblock In E.~W. Mayr and N.~Portier, editors, \emph{31st International
  Symposium on Theoretical Aspects of Computer Science {(STACS} 2014), {STACS}
  2014, March 5-8, 2014, Lyon, France}, volume~25 of \emph{LIPIcs}, pages
  362--373. Schloss Dagstuhl - Leibniz-Zentrum f{\"{u}}r Informatik, 2014.

\bibitem[Hoyrup and Gomaa(2017)]{hoyrupExt17}
M.~Hoyrup and W.~Gomaa.
\newblock On the extension of computable real functions.
\newblock In \emph{32nd Annu. ACM/IEEE Symp. Logic Comput. Sci. (LICS)}, pages
  1--12, 2017.

\bibitem[Ko(1991)]{KeriKobook}
K.-I. Ko.
\newblock \emph{Complexity theory of real functions}.
\newblock Birkhauser Boston Inc., USA, 1991.

\bibitem[Kurtz(1981)]{Kurtz81}
S.~Kurtz.
\newblock \emph{Randomness and genericity in the degrees of unsolvability}.
\newblock Ph.{D.} {D}issertation, University of Illinois, Urbana, 1981.

\bibitem[Levin(December 2023)]{levin2023email}
L.~Levin.
\newblock Email correspondence, December 2023.
\newblock with G.~Barmpalias, and P.~G\'{a}cs, A.~Lewis-Pye, A.~Shen.

\bibitem[Levin(2003)]{Levin2003}
L.~A. Levin.
\newblock The tale of one-way functions.
\newblock \emph{Probl. Inf. Transm.}, 39\penalty0 (1):\penalty0 92–103, 2003.

\bibitem[Martin(1967)]{Martin60}
D.~Martin.
\newblock Measure, category, and degrees of unsolvability.
\newblock Unpublished manuscript, 1967.

\bibitem[Martin-L{\"o}f(1966)]{MR0223179}
P.~Martin-L{\"o}f.
\newblock The definition of random sequences.
\newblock \emph{Inf. Comput.}, 9:\penalty0 602--619, 1966.

\bibitem[Miller and Yu(2008)]{milleryutran}
J.~S. Miller and L.~Yu.
\newblock {O}n initial segment complexity and degrees of randomness.
\newblock \emph{Trans. Amer. Math. Soc.}, 360\penalty0 (6):\penalty0
  3193--3210, 2008.

\bibitem[Paris(1977)]{paris77}
J.~Paris.
\newblock Measure and minimal degrees.
\newblock \emph{Ann. Math. Logic}, 11:\penalty0 203--216, 1977.

\bibitem[Pour-El and Richards(1989)]{PR89}
M.~B. Pour-El and J.~I. Richards.
\newblock \emph{Computability in Analysis and Physics}.
\newblock Perspect. Math. Log. Springer, Berlin, 1989.

\bibitem[Stephan(2006)]{MR2258713frank}
F.~Stephan.
\newblock Martin-{L}\"{o}f random and {${\rm PA}$}-complete sets.
\newblock In \emph{Logic Colloquium '02}, volume~27 of \emph{Lect. Notes Log.},
  pages 342--348. Assoc. Symbol. Logic, La Jolla, CA, 2006.

\bibitem[Turing(1936)]{Turing36}
A.~M. Turing.
\newblock On computable numbers with an application to the
  {E}ntscheidungsproblem.
\newblock \emph{Proc. London Math. Soc.}, 42\penalty0 (3):\penalty0 230--265,
  1936.
\newblock A correction, 43:544--546.

\bibitem[van Lambalgen(1990)]{vLamb90}
M.~van Lambalgen.
\newblock The axiomatization of randomness.
\newblock \emph{J. Symbolic Logic}, 55\penalty0 (3):\penalty0 1143--1167, 1990.

\bibitem[Weihrauch(1993)]{WEIHRAUCH1993191}
K.~Weihrauch.
\newblock Computability on computable metric spaces.
\newblock \emph{Theoretical Computer Science}, 113\penalty0 (2):\penalty0
  191--210, 1993.

\bibitem[Weihrauch(2013)]{Weihrauchbook}
K.~Weihrauch.
\newblock \emph{Computable Analysis: An Introduction}.
\newblock Springer Publishing Company, Incorporated, 2013.

\bibitem[Yu(2024)]{yu2024personal}
L.~Yu.
\newblock Personal correspondence, 2024.

\end{thebibliography}

\end{document}